\theoremstyle{definition}
\newtheorem{comcnt}{Anything}[section]
\newcommand\thingy{%
\refstepcounter{comcnt}\medbreak\noindent\textparagraph\textbf{\thecomcnt.} }
\newtheorem{prop}[comcnt]{Proposition}
\newtheorem{lem}[comcnt]{Lemma}
\newtheorem{dfn}[comcnt]{Definition}
\newtheorem{cor}[comcnt]{Corollary}
\newtheorem{rmk}[comcnt]{Remark}
\newcommand{\Frac}{\mathop{\mathrm{Frac}}\nolimits}
\newcommand{\arctanh}{\mathop{\mathrm{arctanh}}\nolimits}
\mathchardef\emdash="07C\relax
\mathchardef\hyphen="02D\relax
\title{The Hadwiger-Nelson problem over certain fields}
\author{David A. Madore}
\begin{document}
\maketitle

\begin{abstract}
We compute the Hadwiger-Nelson numbers $\chi(E^2)$ for certain number
fields $E$, that is, the smallest number of colors required to color
the points in the plane with coordinates in~$E$ so that no two points
at distance $1$ from one another have the same color.  Specifically,
we show that $\chi(\mathbb{Q}(\sqrt{2})^2) = 2$, that
$\chi(\mathbb{Q}(\sqrt{3})^2) = 3$, that $\chi(\mathbb{Q}(\sqrt{7})^2)
= 3$ despite the fact that the graph $\Gamma(\mathbb{Q}(\sqrt{7})^2)$
is triangle-free, and that $4 \leq \chi(\mathbb{Q}(\sqrt{3},
\sqrt{11})^2) \leq 5$.  We also discuss some results over other
fields, for other quadratic fields.  We conclude with some comments on
the use of the axiom of choice.
\end{abstract}

\section{Introduction}

\thingy The \emph{Hadwiger-Nelson problem} asks what is the minimum
number $\chi(\mathbb{R}^2)$ of colors required to color the plane
$\mathbb{R}^2$ in such a way that two points $x,x'$ at distance $1$
from one another (i.e., such that $(x'_1-x_1)^2 + (x'_2-x_2)^2 = 1$)
never have the same color.  (We refer to \cite{Soifer2009} for more
about this problem, especially to chapter 2 for basic information and
chapter 3 for a historical account of how the problem emerged and how
it camed to be associated with the names of Hugo Hadwiger and Edward
Nelson.)  In other words, the question is that of the chromatic number
$\chi(\mathbb{R}^2) = \chi(\Gamma(\mathbb{R}^2))$ of the graph
$\Gamma(\mathbb{R}^2)$ whose vertices are the points of $\mathbb{R}^2$
with an edge connecting any two points at distance $1$ from one
another.

\thingy The best bounds currently known are
$4\leq\chi(\mathbb{R}^2)\leq 7$, and are proved using completely
elementary methods: the lower bound $\chi(\mathbb{R}^2) \geq 4$ is
obtained by embedding an explicit finite graph with chromatic
number $4$ in $\Gamma(\mathbb{R}^2)$ (typically \emph{Moser's
  spindle}, cf. \cite[fig. 2.2]{Soifer2009} and
\ref{embedding-mosers-spindle} below; or the \emph{Golomb graph},
cf. \cite[fig. 2.8]{Soifer2009}), whereas the upper bound
$\chi(\mathbb{R}^2) \leq 7$ is obtained by an explicit coloring
(typically by tiling the plane with hexagons of diameter just less
than $1$ and periodically coloring them using the $7$-coloring
sometimes known as ``Heawood's map'': see \cite[fig. 2.5]{Soifer2009}
for details, and cf. also \cite{Sevennec2013} for an algebraic
presentation of Heawood's map and some if its other remarkable
properties).

\thingy\label{reminder-de-bruijn-erdos} It is also worth recalling the
De Bruijn-Erdős theorem (\cite[theorem 1]{DeBruijnErdos1951}), which
guarantees that an infinite graph $G$ is $n$-colorable iff every
finite subgraphs of $G$ is (i.e., $\chi(G)$ is the upper bound of the
$\chi(G_0)$ for all finite subgraphs $G_0$ of $G$).  This relies on
some form of the axiom of choice (which we assume throughout, but see
section \ref{section-choice} for comments) and can be seen, for
example, as an immediate consequence of the compactness theorem for
propositional calculus (see, e.g., \cite[theorem 4.5]{Poizat2000})
applied to the (infinite) set of propositional variables ``vertex $v$
has color $i$'' and the (infinite) set of axioms stating that each
vertex has exactly one color and no two adjacent vertices have the
same color.

In the case of the Hadwiger-Nelson problem, this tells us that any
lower bound on $\chi(\mathbb{R}^2)$ can be obtained by finding a
finite unit-distance graph with that chromatic number (one immediate
consequence of this is that lower bounds on $\chi(\mathbb{R}^2)$ are
necessarily provable: see \ref{remarks-real-closed-fields} for
details).

\bigbreak

\thingy There are several ways the Hadwiger-Nelson problem can be
generalized.  An obvious one consists of changing the dimension from
$2$ to $d$: we refer to \cite[chapter 10]{Soifer2009} for a discussion
on the bounds known for $\chi(\mathbb{R}^d)$.  This paper is mostly
concerned with the case $d=2$, although we will keep $d$ as a variable
whenever it is irrelevant.

\thingy Another way to generalize the Hadwiger-Nelson problem is to
restrict oneself to coloring the points whose coordinates lie in a
certain subfield $E$ of $\mathbb{R}$, i.e., ask for the chromatic
number $\chi(E^2)$, or more generally $\chi(E^d)$, of the graph
$\Gamma(E^d)$ whose whose vertices are the points of $E^d$ with an
edge connecting any two points $(x_1,\ldots,x_d)$ and
$(x'_1,\ldots,x'_d)$ whenever $(x'_1-x_1)^2 + \cdots + (x'_d-x_d)^2 =
1$.

Note that even though the notion of ``distance'' might no longer be
applicable, this graph $\Gamma(E^d)$ as we have defined it, and
consequently its chromatic number $\chi(E^d)$, make sense for
\emph{any} field whatsoever, or in fact, any commutative ring (or
indeed, any kind of ring), not necessarily embeddable in $\mathbb{R}$.
So it makes sense, for example, to ask for the value of
$\chi(\mathbb{C}^2)$, which may or may not be finite (this does not
seem to have been studied, and the present author does not know
anything beyond the trivial bound $4 \leq \chi(\mathbb{R}^2) \leq
\chi(\mathbb{C}^2) \leq \infty$; but see
\ref{comparison-of-characteristics} and \ref{lorentzian-case}), or
$\chi((\mathbb{F}_p)^2)$ where $\mathbb{F}_p = \mathbb{Z}/p\mathbb{Z}$
is the finite field with $p$ elements (since
$\Gamma((\mathbb{F}_p)^2)$ is a finite graph, this can be computed for
any given $p$), or again $\chi((\mathbb{Q}_p)^2)$ where $\mathbb{Q}_p$
is the field of $p$-adic numbers (as we explain in
\ref{main-result-valuation-rings} and \ref{remark-p-adic-fields}, we
have $\chi((\mathbb{Q}_p)^2) \leq \chi((\mathbb{F}_p)^2)$ if $p\equiv
3\pmod{4}$).

\thingy One classical result (\cite[theorem 1]{Woodall1973},
cf. \cite[11.2]{Soifer2009}) is that $\chi(\mathbb{Q}^2) = 2$: this is
proved by reducing modulo $2$ (see \ref{remark-reduction-mod-4} for a
proof of this result in the formalism of this paper).  The values
$\chi(\mathbb{Q}^3) = 2$ and $\chi(\mathbb{Q}^4) = 4$ are also known
(\cite{BendaPerles2000}, cf. \cite[11.3 \& 11.4]{Soifer2009}).  For
other kinds of fields, nothing seems to have been said: the question
of finding $\chi(E^2)$ for any number field $E$, and
$\mathbb{Q}(\sqrt{2})$ in particular, is listed as an open problem
in \cite[11.6]{Soifer2009}.

The main results of this paper are that $\chi(\mathbb{Q}(\sqrt{2})^2)
= 2$ (prop. \ref{chromatic-number-of-q-sqrt-2}), that
$\chi(\mathbb{Q}(\sqrt{3})^2) = 3$
(prop. \ref{chromatic-number-of-q-sqrt-3}), that
$\chi(\mathbb{Q}(\sqrt{7})^2) = 3$ despite the fact that the graph
$\Gamma(\mathbb{Q}(\sqrt{7})^2)$ is triangle-free
(prop. \ref{chromatic-number-of-q-sqrt-7}), and that $4 \leq
\chi(\mathbb{Q}(\sqrt{3}, \sqrt{11})^2) \leq 5$
(prop. \ref{chromatic-number-of-q-sqrt-3-and-11}).  We also discuss
some results over other fields.

Yet another generalization, consisting of changing the quadratic form
$x_1^2 + \cdots + x_d^2$ used to define the distance, will be
discussed biefly in section \ref{section-quadratic-form}.  We conclude
with comments on the use of the axiom of choice in
section \ref{section-choice}.

\bigbreak

\thingy\label{conventions}\textbf{Conventions.} By a ``graph'', we
mean a set $X$ of ``vertices'', together with a set of two-element
subsets of $X$ called ``edges'', i.e., an undirected graph without
multiple edges or self-edges.  Two vertices connected by an edge are
also said to be ``adjacent''.  A \emph{graph homomorphism} $\psi\colon
G \to G'$ between graphs $G$ and $G'$ is a map $\psi$ from the set of
vertices of $G$ to that of $G'$ such that if $x$ and $x'$ are adjacent
then $\psi(x)$ and $\psi(x')$ are adjacent.

A \emph{coloring} of a graph $G$ with $n$ colors is a graph
homomorphism from $G$ to the complete (=clique) graph $K_n$ consisting
of $n$ vertices with all $n(n-1)/2$ possible edges (we also say that
$G$ is colorable using $n$ colors, or simply $n$-colorable): the set
of vertices given each color is, of course, those which map to a given
vertex of the target.  This definition makes it clear that if
$\psi\colon G \to G'$ is a graph homomorphism and $G'$ is
$n$-colorable, then so is $G$ (we are not assuming $\psi$ to be
injective): if $G' \to K_n$ is a coloring then the composite with
$\psi$ gives a coloring $G \to K_n$ which we say is obtained by
``pulling back'' the coloring of $G'$ by $\psi$.  We write $\chi(G)$
and call \emph{chromatic number} of $G$ the smallest natural number
$n$ such that $G$ is $n$-colorable, or $\infty$ if such $n$ does not
exist: by what has just been said, if $\psi\colon G \to G'$ is a graph
homomorphism then $\chi(G) \leq \chi(G')$.

Also, we stated the De Bruijn-Erdős theorem
in \ref{reminder-de-bruijn-erdos} above by considering all finite
subgraphs $G_0$ of a graph $G$ (i.e., injective graph homomorphisms
$G_0 \to G$ with $G_0$ a finite graph), but a moment's thought
suffices to see that the statement is equally valid for induced finite
subgraphs (i.e., injective graph homomorphisms $G_0 \to G$, with $G_0$
a finite graph, such that $x,x'$ are adjacent \emph{iff}
$\psi(x),\psi(x')$ are) or simply all homomorphisms $G_0 \to G$, with
$G_0$ a finite graph.  For consistency's sake, we have tried to always
use and speak of homomorphisms (even though they are often, in fact,
injective, or even embeddings of an induced subgraph, and it generally
does not matter how they are considered).

\section{Generalities and lower bounds}\label{section-generalities}

We formalize the notion suggested in the introduction:

\begin{dfn}\label{definition-main-graph}
Let $E$ be any field, or even any (nonzero\footnote{If $E$ were the
  zero ring (i.e., the ring in which $0=1$), then $\Gamma(E^d)$ would
  consist of a single vertex connected to itself by an edge, and which
  is therefore not colorable using any number of colors: we exclude
  this degenerate case because we consider only graphs with no
  self-edges.}) commutative ring, and $d \geq 1$.  We define a graph
$\Gamma(E^d)$ as follows: vertices of $\Gamma(E^d)$ are $d$-tuples
from $E$, with an edge between $(x_1,\ldots,x_d)$ and
$(x'_1,\ldots,x'_d)$ whenever $(x'_1-x_1)^2+\cdots+(x'_d-x_d)^2 = 1$.
We write $\chi(E^d) = \chi(\Gamma(E^d))$ for the chromatic number of
this graph $\Gamma(E^d)$ (possibly $+\infty$).
\end{dfn}

\thingy\label{observation-morphisms} A trivial observation: given any
morphism of (nonzero) commutative rings $\psi\colon E\to E'$, we get a
homomorphism of graphs $\Gamma(E^d) \to \Gamma(E^{\prime d})$ by
taking $x = (x_1,\ldots,x_d)$ to $\psi(x) =
(\psi(x_1),\ldots,\psi(x_d))$, where a ``homomorphism of graphs'' was
defined in \ref{conventions} (if $x,x'$ are adjacent in $\Gamma(E^d)$
then $\psi(x),\psi(x')$ are adjacent in $\Gamma(E^{\prime d})$); as
noted there, pulling back by $\psi$ any coloring of $\Gamma(E^{\prime
  d})$ gives a coloring of $\Gamma(E^d)$ with the same number of
colors, so $\chi(E^d) \leq \chi(E^{\prime d})$.  This applies in
particular to an extension of fields: if $E\subseteq E'$ are fields
then $\chi(E^d) \leq \chi(E^{\prime d})$ (something which was obvious
from the start); it also applies to a quotient ring: if $A$ is a
commutative ring with an ideal $I$, then $\chi(A^d) \leq
\chi((A/I)^d)$.

\bigbreak

To get a lower bound on $\chi(E^2)$, as we recalled
in \ref{reminder-de-bruijn-erdos}, we need to construct a homomorphism
from a finite graph to $\Gamma(E^2)$.  Practically the only two useful
graphs which are known in this context are the triangle and Moser's
spindle, which we now discuss:

\begin{lem}\label{embedding-triangle}
If $E$ is a field of characteristic $\neq 2$ in which $3$ is a square
(and we write $\sqrt{3}$ for a square root of it), then the points
$(0,0)$, $(1,0)$ and $(\frac{1}{2},\frac{\sqrt{3}}{2})$ of $E^2$
induce an homomorphism from the triangle graph $C_3$ to $\Gamma(E^2)$,
showing that $\chi(E^2) \geq 3$.
\end{lem}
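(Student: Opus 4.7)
The plan is to verify directly, by an elementary coordinate computation, that the three specified points of $E^2$ are pairwise at squared-distance $1$; this will exhibit an (injective) graph homomorphism from the triangle graph $C_3$ to $\Gamma(E^2)$, whence, by the general principle recalled in \ref{conventions} (and reiterated in \ref{observation-morphisms}) that graph homomorphisms do not increase chromatic number, one obtains $\chi(E^2) \geq \chi(C_3) = 3$.

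First I would observe that since $\mathrm{char}(E) \neq 2$ the element $\frac{1}{2} \in E$ is defined, and since $3$ is assumed to be a square in $E$ (with a chosen square root $\sqrt{3}$), the three points $(0,0)$, $(1,0)$ and $(\frac{1}{2},\frac{\sqrt{3}}{2})$ indeed lie in $E^2$. I would then compute the three pairwise squared distances: between $(0,0)$ and $(1,0)$ it is $1^2 + 0^2 = 1$; between $(0,0)$ and $(\frac{1}{2},\frac{\sqrt{3}}{2})$ it is $\frac{1}{4} + \frac{3}{4} = 1$; and between $(1,0)$ and $(\frac{1}{2},\frac{\sqrt{3}}{2})$ it is $(-\frac{1}{2})^2 + (\frac{\sqrt{3}}{2})^2 = \frac{1}{4} + \frac{3}{4} = 1$. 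In particular the three points are pairwise distinct (any two coinciding would give squared-distance $0$, not $1$), so the map sending the three vertices of $C_3$ to these three points is well-defined and sends each of the three edges of $C_3$ to an edge of $\Gamma(E^2)$.

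This gives the desired homomorphism $C_3 \to \Gamma(E^2)$, and pulling back any $n$-coloring of $\Gamma(E^2)$ along it produces an $n$-coloring of $C_3$, forcing $n \geq \chi(C_3) = 3$, as required. There is essentially no obstacle in this argument: the entire content lies in the squared-distance computations above, and the only role of the hypotheses $\mathrm{char}(E) \neq 2$ and ``$3$ is a square'' is to ensure that $\frac{1}{2}$ and $\sqrt{3}$ make sense as elements of $E$ so that the third vertex can be written down.
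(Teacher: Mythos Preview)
Your proof is correct and is essentially the same as the paper's: the paper simply remarks that ``the proof is contained in the statement (together with the obvious fact that $\chi(C_3) = 3$)'', and your argument just spells out the routine squared-distance verifications that this remark leaves implicit.
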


The proof is contained in the statement (together with the obvious
fact that $\chi(C_3) = 3$).

We recall that, by quadratic reciprocity, $3$ is a square in
$\mathbb{F}_q$ iff $q\equiv \pm 1\pmod{12}$ or $q$ is a power of $3$
(or of $2$).

\begin{lem}\label{embedding-mosers-spindle}
If $E$ is a field of characteristic $\neq 2$ in which $3$ and $11$ are
squares, then \emph{Moser's spindle} graph (displayed below) admits a
homomorphism to $\Gamma(E^2)$, showing that $\chi(E^2) \geq 4$.
\end{lem}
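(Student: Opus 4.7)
The goal is to place the $7$ vertices of Moser's spindle at points of $E^2$ so that each of its $11$ edges joins two points at squared distance $1$; since Moser's spindle has chromatic number $4$, this will prove $\chi(E^2)\geq 4$.

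Recall that Moser's spindle is built from two rhombi, each consisting of two equilateral triangles of side $1$ glued along an edge: the rhombi share a single ``pivot'' vertex, and the two vertices diametrically opposite the pivot (one in each rhombus) are themselves at distance $1$. I would place the pivot at $V_0 = (0,0)$ and use $\sqrt{3}\in E$ to realize the first rhombus with vertices $V_1 = (\frac{\sqrt{3}}{2},\frac{1}{2})$, $V_2 = (\sqrt{3},0)$, $V_3 = (\frac{\sqrt{3}}{2},-\frac{1}{2})$, in the spirit of lemma~\ref{embedding-triangle}; the long diagonal $V_0 V_2$ has length $\sqrt{3}$ and is not an edge of the spindle.

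For the second rhombus I would take the image of the first under the orthogonal transformation $(x,y)\mapsto(cx - sy,\,sx + cy)$ around $V_0$, where $c, s\in E$ satisfy $c^2 + s^2 = 1$. Writing $V_5 = (c\sqrt{3},\,s\sqrt{3})$ for the image of $V_2$, the required adjacency $V_2 \sim V_5$ becomes $|V_2 - V_5|^2 = 6 - 6c = 1$, forcing $c = \frac{5}{6}$, whence $s^2 = \frac{11}{36}$: this is precisely where $\sqrt{11}\in E$ enters, giving $s = \pm\frac{\sqrt{11}}{6}\in E$. With either sign chosen, and $V_4, V_6$ defined as the images of $V_1, V_3$ under this transformation, all resulting coordinates lie in $\mathbb{Q}(\sqrt{3},\sqrt{11})\subseteq E$ (the only products of square roots appearing are $\sqrt{3}\cdot\sqrt{11}$, a product of two elements of $E$).

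It remains to verify that each of the $11$ edges is realized by a pair of points at squared distance $1$: the five edges within the first rhombus hold by the triangle embedding, the five within the second rhombus follow automatically because $(x,y)\mapsto(cx - sy,\,sx + cy)$ preserves the form $x^2 + y^2$ (this is exactly the identity $c^2+s^2=1$), and the edge $V_2 V_5$ holds by the choice of $c$. The hypothesis $\mathrm{char}(E)\neq 2$ is used so that $2$ and $6$ are invertible. The only real ``work'' is the computation pinning down $c = \frac{5}{6}$ and thereby dictating the appearance of $\sqrt{11}$; beyond that, no genuine obstacle is expected, since the two rhombi are rigid and the spindle's combinatorics match the geometry by construction.
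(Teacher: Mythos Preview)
Your proposal is correct and follows essentially the same approach as the paper: the paper simply lists explicit coordinates for the seven vertices (in a slightly different orientation, with a \emph{side} of the first rhombus along the $x$-axis rather than its long diagonal) and leaves the edge-by-edge check as a ``straightforward computation,'' whereas you explain where the coordinates come from via the rotation by $(c,s)=(\tfrac{5}{6},\tfrac{\sqrt{11}}{6})$. One minor slip: $\mathrm{char}(E)\neq 2$ alone does not guarantee that $6$ is invertible---that also requires $\mathrm{char}(E)\neq 3$---but the paper's own coordinates carry denominators $6$ and $12$ and so share the same oversight.
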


\begin{center}
\begin{tikzpicture}
\begin{scope}[scale=3]
\begin{scope}[every node/.style={circle,fill,inner sep=0.8mm}]
\node [label={left:$P_0$}] (v0) at (0,0) {};
\node [label={right:$P_1$}] (v1) at (1.000,0) {};
\node [label={above:$P_2$}] (v2) at (0.500,0.866) {};
\node [label={right:$P_3$}] (v3) at (1.500,0.866) {};
\node [label={right:$P_4$}] (v4) at (0.833,0.553) {};
\node [label={left:$P_5$}] (v5) at (-0.062,0.998) {};
\node [label={right:$P_6$}] (v6) at (0.771,1.551) {};
\end{scope}
\begin{scope}[line width=1pt]
\draw (v0) -- (v1);
\draw (v0) -- (v2);
\draw (v0) -- (v4);
\draw (v0) -- (v5);
\draw (v1) -- (v2);
\draw (v1) -- (v3);
\draw (v2) -- (v3);
\draw (v3) -- (v6);
\draw (v4) -- (v5);
\draw (v4) -- (v6);
\draw (v5) -- (v6);
\end{scope}
\end{scope}
\end{tikzpicture}
\end{center}

\begin{proof}
Let $\sqrt{3},\sqrt{11}$ be square roots of $3$, $11$ in $E$, and
$\sqrt{33}$ their product.  Then a straightforward computation shows
that the points
\[
\begin{array}{c}
P_0 = (0,0),\; P_1 = (1,0),\; P_2 = (\frac{1}{2},\frac{\sqrt{3}}{2}),\;
P_3 = (\frac{3}{2},\frac{\sqrt{3}}{2}),\\
P_4 = (\frac{5}{6},\frac{\sqrt{11}}{6}),\;
P_5 = (\frac{5}{12}-\frac{\sqrt{33}}{12}, \frac{5\sqrt{3}}{12}+\frac{\sqrt{11}}{12}),\;
P_6 = (\frac{5}{4}-\frac{\sqrt{33}}{12}, \frac{5\sqrt{3}}{12}+\frac{\sqrt{11}}{4})\;
\end{array}
\]
form a graph with edges $\{P_0,P_1\}$, $\{P_0,P_2\}$, $\{P_1,P_2\}$,
$\{P_1,P_3\}$, $\{P_2,P_3\}$, $\{P_0,P_4\}$, $\{P_0,P_5\}$,
$\{P_4,P_5\}$, $\{P_4,P_6\}$, $\{P_5,P_6\}$, $\{P_3,P_6\}$,
represented above.  It is clear that this graph has chromatic
number $4$.
\end{proof}

We recall that, by quadratic reciprocity, $11$ is a square in
$\mathbb{F}_q$ iff $q$ is congruent modulo $44$ to an element of
$\{\pm 1, \pm 9, \pm 5, \pm 7, \pm 19\}$ or $q$ is a power of $11$ (or
of $2$).

\bigbreak

\begin{rmk}\label{fields-of-charac-2}
Let us get the fields of characteristic $2$ out of the way with the
following remark.

If $E$ is a field of characteristic $2$, then
$(x'_1-x_1)^2+\cdots+(x'_d-x_d)^2 = 1$ is equivalent to
$(x'_1-x_1)+\cdots+(x'_d-x_d) = 1$, that is, $\lambda(x'-x) = 1$ where
$\lambda$ is the $E$-linear form $(z_1,\ldots,z_d) \mapsto
z_1+\cdots+z_d$.  Complete $1$ to a basis of $E$ as an
$\mathbb{F}_2$-vector space (this uses the axiom of choice) and let
$\hat\lambda(z)$ be the coordinate on $1$ of $\lambda(z)$: then we get
a coloring of $E^d$ with two colors if we choose the color of $x$
according to the value of $\hat\lambda(x) \in \mathbb{F}_2$.  Since
obviously $1$ color does not suffice (for $d\geq 1$), this shows that
$\chi(E^d) = 2$.
\end{rmk}

\section{Obtaining upper bounds by reduction}\label{section-upper-bounds}

\thingy \textbf{Informal discussion} (only used to motivate what
follows).  Assume $K$ is a number field (i.e., a finite extension
of $\mathbb{Q}$), and $\mathfrak{p}$ is a maximal ideal of the ring of
integers $\mathcal{O}_K$ of $K$.  (More generally, the more
algebraically oriented reader might wish to assume that $\mathfrak{p}$
is a maximal ideal of a Dedekind domain $\mathcal{O}_K$ with fraction
field $K$.)  We call $\kappa = \mathcal{O}_K/\mathfrak{p}$ the residue
field of $\mathfrak{p}$.

(An example of such a situation occurs when $K = \mathbb{Q}$ so that
$\mathcal{O}_K = \mathbb{Z}$ and $\mathfrak{p}$ is the ideal generated
by an ordinary prime number $p\geq 2$, with $\kappa = \mathbb{F}_p$;
but this will not give any useful consequences since the value of
$\chi(\mathbb{Q}^2)$ is known.  We refer, e.g., to
\cite[I.§1--3]{Neukirch1999} for background on number fields.)

We would like to obtain an upper bound on $\chi(K^d)$ by comparing it
with $\chi(\kappa^d)$ using some kind of ``reduction
mod $\mathfrak{p}$'' argument and
applying \ref{observation-morphisms}.  Let us informally discuss how
this can be done.

One cannot reduce the elements of $K$ mod $\mathfrak{p}$ as one can
for the elements of $\mathcal{O}_K$, so there is no obvious map from
(affine space) $K^d$ to $\kappa^d$.  However, as we now explain, there
\emph{is} a natural ``reduction'' map $\mathbb{P}^d(K) \to
\mathbb{P}^d(\kappa)$, where $\mathbb{P}^d$ refers to projective
$d$-dimensional space.  (Recall that, for $K$ an arbitrary field,
$\mathbb{P}^d(K)$ is the set of $(z_0,\ldots,z_d) \in K^{d+1}$, not
all zero, modulo multiplication by a nonzero constant; we write
$(z_0:\cdots:z_d)$ for the class of $(z_0,\ldots,z_d)$ in
$\mathbb{P}^d(K)$, and which we consider affine space $K^d$ embedded
in $\mathbb{P}^d(K)$ by $(z_1,\ldots,z_d) \mapsto
(1:z_1:\cdots:z_d)$).  This reduction map $\mathbb{P}^d(K) \to
\mathbb{P}^d(\kappa)$ can be defined by ``clearing denominators'',
i.e. multiplying the homogeneous coordinates $z_0,\ldots,z_d$ by an
appropriate element of $K$ so that they all lie in $\mathcal{O}_K$
with at least one of them not in $\mathfrak{p}$ (i.e., their valuation
with respect to $\mathfrak{p}$ are all nonnegative and not all
positive), and then reducing mod $\mathfrak{p}$ to get an element of
$\mathbb{P}^d(\kappa)$.  In more sophisticated terms, this works
because $\mathbb{P}^d(K)$ actually coincides with
$\mathbb{P}^d(\mathcal{O}_K)$ when $\mathcal{O}_K$ is a Dedekind ring
with quotient field $K$ (the projective space $\mathbb{P}^d(A)$ over a
ring $A$ is more delicate to define than over a field: for example, it
is the set of projective submodules $L$ of rank $1$ of $A^{d+1}$,
where a projective submodule of $A^{d+1}$ means there is $M$ such that
$L \oplus M = A^{d+1}$, and rank $1$ means that for any quotient of
$A$ by a maximal ideal, the corresponding quotient of $L$ is
$1$-dimensional vector space).

So now we have a reduction map $K^d \to \mathbb{P}^d(K) \to
\mathbb{P}^d(\kappa)$ and we assume we find a coloring of
$\Gamma(\kappa^d)$ with few colors: $\kappa^d$ is embedded in
$\mathbb{P}^d(\kappa)$ but in general this does not avail us because
many points of $K^d$ will reduce to the ``hyperplane at infinity''
$\{z_0=0\}$ in $\mathbb{P}^d(\kappa)$ (the complement of the subset
$\kappa^d \subseteq \mathbb{P}^d(\kappa)$).  However, if it so happens
that $z_1^2 + \cdots + z_d^2$ is ``anisotropic'', meaning that $z_1^2
+ \cdots + z_d^2 = 0$ has no non-trivial solution in $\kappa$ (we also
say that the quadric $\{z_1^2 + \cdots + z_d^2 = z_0^2\} \subseteq
\mathbb{P}^d(\kappa)$ has no points at infinity, this quadric being
the projective completion of the affine ``unit circle'' $\{z_1^2 +
\cdots + z_d^2 = 1\} \subseteq \kappa^d$), then by translating we can
``stay away from infinity'', and we can get a coloring of $K^d$ from
one of $\kappa^d$, as explained by the following proposition and
corollary:

\begin{prop}\label{main-result-valuation-rings}
Let $A$ be a valuation ring with valuation $v$: write $\mathfrak{m} :=
\{x\in A : v(x)>0\}$ for its (unique) maximal ideal, $\kappa :=
A/\mathfrak{m}$ the residue field, and $K := \Frac(A)$ for the field
of fractions of $A$.  Assume that the quadratic form $z_1^2 + \cdots +
z_d^2$ is \emph{anisotropic} over $\kappa$ (that is, $z_1^2 + \cdots +
z_d^2 = 0$ has no solution in $\kappa$ other than the trivial
$(z_1,\ldots,z_d) = (0,\ldots,0)$).

Then there is a graph homomorphism $\psi\colon \Gamma(K^d) \to
\Gamma(A^d)$.  Recalling (\ref{observation-morphisms}) that there are
also obvious graph homomorphisms $\Gamma(A^d) \to \Gamma(K^d)$ and
$\Gamma(A^d) \to \Gamma(\kappa^d)$, we have: $\chi(K^d) = \chi(A^d)
\leq \chi(\kappa^d)$.
\end{prop}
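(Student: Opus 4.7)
The plan is to build the graph homomorphism $\psi\colon\Gamma(K^d)\to\Gamma(A^d)$ by exploiting anisotropy to show that every edge of $\Gamma(K^d)$ already lies inside a translate of $A^d$, and then collapsing these translates onto $A^d$ by an axiom-of-choice argument.

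First I would prove the following valuation estimate, which is where anisotropy does the work: for any $(y_1,\ldots,y_d)\in K^d$ not all zero,
\[
v(y_1^2+\cdots+y_d^2) \;=\; 2\min_i v(y_i).
\]
Indeed, pick $j$ achieving the minimum $m:=v(y_j)$ and set $w_i:=y_i/y_j\in A$, so $w_j=1\notin\mathfrak{m}$. If $\sum w_i^2$ lay in $\mathfrak{m}$, its reduction in $\kappa$ would be a nontrivial zero of $\sum z_i^2$ (the coordinate $\bar w_j=1$ being nonzero), contradicting anisotropy. Hence $v(\sum w_i^2)=0$ and $v(\sum y_i^2)=2m$.

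Specializing to $y=x'-x$ with $\sum y_i^2=1$, the left-hand side has valuation $0$, so $\min_i v(x'_i-x_i)=0$ and in particular $x'-x\in A^d$. Thus every edge of $\Gamma(K^d)$ is contained in a coset $x_0+A^d$, and the induced subgraph on each coset is isomorphic to $\Gamma(A^d)$ via translation. Using the axiom of choice, select a representative $r(C)\in K^d$ for each coset $C\in K^d/A^d$, and define $\psi(x):=x-r(x+A^d)\in A^d$. If $x,x'$ are adjacent in $\Gamma(K^d)$ they sit in the same coset, so $\psi(x')-\psi(x)=x'-x$ and $\sum(\psi(x')_i-\psi(x)_i)^2=1$, proving $\psi$ is a graph homomorphism.

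Finally I would assemble the chain of inequalities: the inclusion $A^d\hookrightarrow K^d$ is a graph homomorphism giving $\chi(A^d)\leq\chi(K^d)$; the map $\psi$ gives the reverse $\chi(K^d)\leq\chi(A^d)$, whence equality; and the reduction map $A^d\to\kappa^d$ is a graph homomorphism (since $1\neq 0$ in $\kappa$ guarantees adjacent points remain distinct and at ``distance'' $1$ after reduction), yielding $\chi(A^d)\leq\chi(\kappa^d)$ by \ref{observation-morphisms}. The only real subtlety is the valuation computation in the first step; everything else is formal manipulation plus an invocation of choice to pick coset representatives.
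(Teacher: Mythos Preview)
Your proof is correct and follows essentially the same route as the paper: divide by a coordinate of minimal valuation, use anisotropy over $\kappa$ to force that normalized sum of squares to be a unit, conclude that differences along edges lie in $A^d$, and then translate each coset of $A^d$ back to $A^d$ via a chosen representative. The only cosmetic difference is that you package the first step as the general identity $v(\sum y_i^2)=2\min_i v(y_i)$, whereas the paper argues directly by contradiction for the special case $\sum x_i^2=1$; the underlying computation is identical.
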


\thingy For the reader's convenience, we recall, cf. e.g.,
\cite[§10]{Matsumura1989} or \cite[II.§3]{Neukirch1999}, that a
\emph{valuation ring} $A$ is an integral domain such that every
element $x$ of its field of fractions $K$ satisfies $x \in A$ or
$x^{-1} \in A$.  Such a ring has a unique maximal ideal.  The
valuation $v$ can be defined as the quotient map from $K^\times$ to
the abelian group $K^\times/A^\times$ or ``value group'', where
$K^\times = K\setminus\{0\}$ is the group of nonzero elements of $K$,
and $A^\times$ is the group of invertible elements of $A$; this is
ordered by $v(x) > v(y)$ iff $x/y \in A$; it is extended by putting
$v(0)=\infty$, a symbol greater than all others (in particular, $A =
\{x\in K : v(x)\geq 0\}$).  The essential properties of a valuation
are: (o) $v(x) = \infty$ iff $x=0$, (i) $v(xy) = v(x) + v(y)$ and
(ii) $v(x+y) \geq \min(v(x), v(y))$ (and it follows that, in the
latter, equality in fact holds if $v(x) \neq v(y)$).

In the applications to integers of number fields, or Dedekind domains
in general, $A$ will be a \emph{discrete} valuation ring, meaning that
the value group $K^\times/A^\times$ is simply $\mathbb{Z}$ with the
usual order (although sometimes it will be more convenient to
normalize it differently, e.g., $\frac{1}{2}\mathbb{Z}$).  Nothing
will be lost if the reader assumes this from the start.

\begin{proof}[Proof of \ref{main-result-valuation-rings}]
First observe the following fact: if $x_1^2 + \cdots + x_d^2 = 1$ with
$x_i\in K$, then in fact $x_i \in A$ for all $i$.  Indeed, assume on
the contrary that $x_1^2 + \cdots + x_d^2 = 1$ and $v(x_i)<0$ for
some $i$; and let $u$ be such that $v(x_u)$ is the smallest (=most
negative).  Then each $z_i := x_i/x_u$ belongs to $A$, and $1/x_u$
belongs to $\mathfrak{m}$ (since it has positive valuation), and
$(x_1/x_u)^2 + \cdots + (x_d/x_u)^2 = 1/x_u^2$.  Reducing this
equation mod $\mathfrak{m}$ gives $\bar z_1^2 + \cdots + \bar z_d^2 =
\bar 0$ in $\kappa$.  By assumption, all $\bar z_i$ must be $0$, but
this contradicts $z_u = 1$.

Now define an equivalence relation $\approx$ on $K^d$ by
$(x_1,\ldots,x_d) \approx (x'_1,\ldots,x'_d)$ iff $x'_i - x_i \in A$
for every $i$.  The fact noted in the previous paragraph means that
each edge of $\Gamma(K^d)$ connects two points in the same equivalence
class for $\approx$.  For each $\approx$-equivalence class $C$ of
$\Gamma(K^d)$ (i.e., each element of the quotient group $K^d/A^d$),
choose a representative $\xi_C \in C$, and define $\psi$ on $C$ as
taking $x \in C$ to $x - \xi_C$, which by definition of $\approx$
belongs to $A^d$.  Clearly if $x,x'$ are adjacent in $\Gamma(K^d)$,
they are in the same equivalence class $C$ for $\approx$, and $x -
\xi_C$ and $x' - \xi_C$ are also adjacent, so that $\psi(x)$ and
$\psi(x')$ are.  So we have defined a graph homomorphism $\psi \colon
\Gamma(K^d) \to \Gamma(A^d)$.
\end{proof}

\begin{cor}\label{main-result-reduction-number-fields}
Assume $K$ is a number field, $\mathcal{O}_K$ its ring of integers,
and $\mathfrak{p}$ a maximal ideal of $\mathcal{O}_K$ such that the
cardinality $q =: \mathrm{N}(\mathfrak{p})$ of the residue field
$\mathcal{O}_K/\mathfrak{p} = \mathbb{F}_q$ is congruent to $3$
mod $4$.  Then there is a graph homomorphism $\Gamma(K^2) \to
\Gamma((\mathbb{F}_q)^2)$.  In particular, $\chi(K^2) \leq
\chi((\mathbb{F}_q)^2)$.
\end{cor}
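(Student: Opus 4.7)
The plan is to reduce the corollary to Proposition~\ref{main-result-valuation-rings} by taking $A$ to be the localization $(\mathcal{O}_K)_\mathfrak{p}$ of the ring of integers at~$\mathfrak{p}$. Since $\mathcal{O}_K$ is a Dedekind domain and $\mathfrak{p}$ is a nonzero prime, this localization is a discrete valuation ring; its fraction field is $K$ and its residue field is $\mathcal{O}_K/\mathfrak{p} = \mathbb{F}_q$. So the setup of the proposition is in place once the anisotropy hypothesis is verified.

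To apply the proposition in dimension $d=2$, I must check that $z_1^2 + z_2^2$ is anisotropic over $\mathbb{F}_q$, which is equivalent to $-1$ not being a square in $\mathbb{F}_q$: indeed, if $(z_1,z_2) \neq (0,0)$ satisfied $z_1^2 + z_2^2 = 0$, then $z_2 \neq 0$ (else $z_1=0$ too), whence $(z_1/z_2)^2 = -1$. Since $q \equiv 3 \pmod{4}$, $q$ is odd, and $\mathbb{F}_q^\times$ is cyclic of order $q-1$ with $(q-1)/2$ odd; the squares form the (unique) subgroup of index~$2$, which has odd order, and so cannot contain the unique element of order~$2$, namely~$-1$. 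Hence $-1$ is not a square, and the form is anisotropic.

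With the hypothesis verified, Proposition~\ref{main-result-valuation-rings} supplies a graph homomorphism $\Gamma(K^2) \to \Gamma(A^2)$. Composing with the homomorphism $\Gamma(A^2) \to \Gamma(\mathbb{F}_q^2)$ induced by the quotient $A \twoheadrightarrow A/\mathfrak{m} = \mathbb{F}_q$, available by the functoriality in~\ref{observation-morphisms}, yields the desired homomorphism $\Gamma(K^2) \to \Gamma(\mathbb{F}_q^2)$; the inequality $\chi(K^2) \leq \chi(\mathbb{F}_q^2)$ is then obtained by pulling back any coloring of the target. There is no real obstacle here: the corollary is essentially a specialization of the proposition, and the only substantive ingredient beyond it is the elementary fact that $q \equiv 3 \pmod{4}$ makes $-1$ a nonsquare in $\mathbb{F}_q$.
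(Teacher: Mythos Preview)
Your proof is correct and follows essentially the same approach as the paper: localize at $\mathfrak{p}$ to obtain a discrete valuation ring with fraction field $K$ and residue field $\mathbb{F}_q$, verify anisotropy of $z_1^2+z_2^2$ over $\mathbb{F}_q$ via the non-squareness of $-1$ when $q\equiv 3\pmod 4$, and then invoke Proposition~\ref{main-result-valuation-rings}. Your argument is in fact slightly more explicit than the paper's (you spell out the cyclic-group reason that $-1$ is a nonsquare and make the composition $\Gamma(K^2)\to\Gamma(A^2)\to\Gamma(\mathbb{F}_q^2)$ explicit), but there is no substantive difference.
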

\begin{proof}
If $q \equiv 3\pmod{4}$ then $-1$ is not a square in $\mathbb{F}_q$:
then the quadratic form $z_1^2 + z_2^2 = 0$ is anisotropic (for if
there were a solution with, say, $z_1 \neq 0$, we would have
$(z_2/z_1)^2 = -1$).

Now let $A = \mathcal{O}_{K,\mathfrak{p}}$ be the localization of
$\mathcal{O}_K$ at $\mathfrak{p}$, i.e., the subring of $K$ consisting
of quotients of elements of $\mathcal{O}_K$ whose denominator is not
in $\mathfrak{p}$: since $\mathcal{O}_K$ is a Dedekind domain
(\cite[prop. I.12.8]{Neukirch1999}), this localization is a discrete
valuation ring (\cite[prop. I.11.5]{Neukirch1999}) with fraction
field $K$ and residue field $\mathbb{F}_q$.
Proposition \ref{main-result-valuation-rings} gives the conclusion.
\end{proof}

\begin{rmk}
We stated the corollary for $d=2$.  One does not obtain anything
interesting for $d>2$ because a quadratic form in $\geq 3$ variables
is always isotropic over a finite field (this is a consequence of the
Chevalley-Warning theorem, cf. e.g., \cite[lemma 21.2.3 \&
  prop. 21.2.4]{FriedJarden2008}).
\end{rmk}

\begin{rmk}\label{remark-p-adic-fields}
Corollary \ref{main-result-reduction-number-fields} was deduced from
proposition \ref{main-result-valuation-rings} by applying it to the
localization $A = \mathcal{O}_{K,\mathfrak{p}}$ of $\mathcal{O}_K$
at $\mathfrak{p}$; alternatively but equivalently, one can apply it to
the \emph{completion} of $\mathcal{O}_K$ at $\mathfrak{p}$
(cf. \cite[prop. II.4.3]{Neukirch1999}), which is a $p$-adic field
(i.e., a finite extension of $\mathbb{Q}_p$).  For example,
\ref{main-result-valuation-rings} implies that whenever $p\equiv
3\pmod{4}$, we have $\chi((\mathbb{Q}_p)^2)
\leq\chi((\mathbb{F}_p)^2)$ and $\chi((\mathbb{Q}_p(\sqrt{p}))^2) \leq
\chi((\mathbb{F}_p)^2)$, the right-hand side of which can be computed
explicitly, and it is the latter inequality that will be used (for
$p=3$) in proposition \ref{chromatic-number-of-q-sqrt-3} below
(together with the fact that $\mathbb{Q}(\sqrt{3})$ is a subfield of
$\mathbb{Q}_3(\sqrt{3})$).
\end{rmk}

\thingy\label{remark-reduction-mod-4}
As mentioned in the introduction, it is known that $\chi(\mathbb{Q}^2)
= 2$: this is done by reducing mod $2$ but does not immediately follow
from \ref{main-result-reduction-number-fields} because $z_1^2 + z_2^2
= 0$ has solutions over $\mathbb{Z}/2\mathbb{Z}$; it follows, however,
from the following improvement of \ref{main-result-valuation-rings}
where instead of making an assumption over $A/\mathfrak{m}$ we make
one over $A/\mathfrak{m}^2$: indeed, there are no solutions to $z_1^2
+ z_2^2 = 0$ in $\mathbb{Z}/4\mathbb{Z}$ where $z_1$ or $z_2$ is odd,
so we still must have $\chi(\mathbb{Q}^2) \leq \chi((\mathbb{F}_2)^2)
= 2$.

\begin{prop}
Let $A$ be a valuation ring with valuation $v$: write $\mathfrak{m} :=
\{x\in A : v(x)>0\}$ for its (unique) maximal ideal and $K :=
\Frac(A)$ for the field of fractions of $A$.  Assume that the equation
$z_1^2 + \cdots + z_d^2 = 0$ has no solution mod $\mathfrak{m}^2$
except when all $z_i$ are in $\mathfrak{m}$.  Then there is a graph
homomorphism $\psi\colon \Gamma(K^d) \to \Gamma(A^d)$; in particular,
$\chi(K^d) = \chi(A^d) \leq \chi((A/\mathfrak{m}^2)^d) \leq
\chi((A/\mathfrak{m})^d)$.
\end{prop}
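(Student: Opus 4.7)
My plan is to imitate the proof of \ref{main-result-valuation-rings} almost verbatim, with one sharper bookkeeping step in the denominator-clearing argument, and then deduce the chain of inequalities from \ref{observation-morphisms} applied to the natural quotient maps $A \twoheadrightarrow A/\mathfrak{m}^2 \twoheadrightarrow A/\mathfrak{m}$.

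The first step is to prove the key claim: if $x_1^2 + \cdots + x_d^2 = 1$ with $x_i \in K$, then all $x_i$ already lie in $A$. Assuming the contrary, pick $u$ so that $v(x_u)$ is the most negative of all $v(x_i)$; set $z_i := x_i/x_u$, which by choice of $u$ belong to $A$, with $z_u = 1$. Dividing the original identity by $x_u^2$ yields $z_1^2 + \cdots + z_d^2 = 1/x_u^2$. The point is that $1/x_u \in \mathfrak{m}$ (since $v(1/x_u) = -v(x_u) > 0$), so $1/x_u^2 = (1/x_u) \cdot (1/x_u)$ is a product of two elements of $\mathfrak{m}$ and therefore lies in $\mathfrak{m}^2$. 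Reducing modulo $\mathfrak{m}^2$, the $(z_i)$ give a solution to $\bar z_1^2 + \cdots + \bar z_d^2 = 0$ in $A/\mathfrak{m}^2$; by the hypothesis of the proposition, all $\bar z_i$ must be zero, i.e., all $z_i \in \mathfrak{m}$, contradicting $z_u = 1$.

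From here on the argument is exactly the one in \ref{main-result-valuation-rings}: define $\approx$ on $K^d$ by $x \approx x'$ iff $x'-x \in A^d$; the claim just proved says each edge of $\Gamma(K^d)$ joins $\approx$-equivalent endpoints; pick a representative $\xi_C$ in each class $C$ and set $\psi(x) = x - \xi_C \in A^d$, which is a graph homomorphism $\Gamma(K^d) \to \Gamma(A^d)$. Combined with the inclusion $A \hookrightarrow K$ this gives $\chi(K^d) = \chi(A^d)$, and the two quotient ring maps $A \twoheadrightarrow A/\mathfrak{m}^2 \twoheadrightarrow A/\mathfrak{m}$ give, via \ref{observation-morphisms}, the remaining inequalities $\chi(A^d) \leq \chi((A/\mathfrak{m}^2)^d) \leq \chi((A/\mathfrak{m})^d)$.

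There is no genuine obstacle: the only subtle point is making sure that ``$1/x_u^2 \in \mathfrak{m}^2$'' is legitimate even when the valuation is not discrete, and this is settled by exhibiting $1/x_u^2$ as the explicit product $(1/x_u)(1/x_u)$ of two elements of $\mathfrak{m}$. The proposition is thus a minor but useful strengthening of \ref{main-result-valuation-rings}, exploiting the one extra ``layer'' $\mathfrak{m}/\mathfrak{m}^2$ of the valuation, and as remarked in \ref{remark-reduction-mod-4} it is exactly what is needed to recover $\chi(\mathbb{Q}^2) \leq \chi((\mathbb{F}_2)^2) = 2$ by reduction modulo $4$.
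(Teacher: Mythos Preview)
Your proof is correct and follows essentially the same route as the paper's: the only change from \ref{main-result-valuation-rings} is the observation that $1/x_u^2 \in \mathfrak{m}^2$ (as a product of two elements of $\mathfrak{m}$), which is exactly what the paper notes. One tiny wording slip: after reducing mod $\mathfrak{m}^2$, the hypothesis gives $z_i \in \mathfrak{m}$, not ``$\bar z_i = 0$'' in $A/\mathfrak{m}^2$; but the contradiction with $z_u = 1$ is of course unaffected.
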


\begin{proof}
The proof is almost identical to that
of \ref{main-result-valuation-rings}: if $x_1^2 + \cdots + x_d^2 = 1$
with $x_i\in K$, then in fact $x_i \in A$ for all $i$, because if
$x_1^2 + \cdots + x_d^2 = 1$ and $v(x_i)<0$ for some $i$ and if $u$ is
such that $v(x_u)$ is the smallest, each $z_i := x_i/x_u$ belongs
to $A$, so does $1/x_u$, and $(x_1/x_u)^2 + \cdots + (x_d/x_u)^2 =
1/x_u^2$, and now $1/x_u^2$ belongs, in fact, to $\mathfrak{m}^2$, so
we get $\bar z_1^2 + \cdots + \bar z_d^2 = \bar 0$
in $A/\mathfrak{m}^2$ with $z_u = 1$ not belonging to $\mathfrak{m}$,
contradicting the hypothesis.  The rest is as previously.
\end{proof}

Sometimes we can push even further, as the following example shows
(which the author has not found the courage to try to formulate under
the most general auspices):

\begin{prop}\label{chromatic-number-of-q-sqrt-2}
The chromatic number $\chi(\mathbb{Q}(\sqrt{2})^2)$ of the plane with
coordinates in $\mathbb{Q}(\sqrt{2})^2$ is exactly $2$.  In fact, we
have $\chi(\mathbb{Q}_2(\sqrt{2})^2) = 2$.
\end{prop}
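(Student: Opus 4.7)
The lower bound $\chi(\mathbb{Q}(\sqrt{2})^2)\geq 2$ is immediate: $(0,0)$ and $(1,0)$ form an edge. For the upper bound, since $\mathbb{Q}(\sqrt{2})\hookrightarrow\mathbb{Q}_2(\sqrt{2})$, observation \ref{observation-morphisms} reduces the problem to producing a $2$-coloring of $\Gamma(K^2)$ for $K:=\mathbb{Q}_2(\sqrt{2})$. Set $A:=\mathbb{Z}_2[\sqrt{2}]$, with uniformizer $\pi:=\sqrt{2}$ (so $\pi^2=2$ and the residue field is $\mathbb{F}_2$). Neither \ref{main-result-valuation-rings} nor its mod-$\mathfrak{m}^2$ refinement applies: the pair $(x_1,x_2)=(-1/\pi,1/\pi)\in K^2\setminus A^2$ is a genuine ``bad'' solution of $x_1^2+x_2^2=1$. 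I would therefore work with $A':=\pi^{-1}A$, which is a free $\mathbb{Z}_2$-module with basis $\{1,\pi^{-1}\}$ (but \emph{not} a subring of~$K$).

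The first step is to show: every $(x_1,x_2)\in K^2$ with $x_1^2+x_2^2=1$ in fact satisfies $x_i\in A'$. Following \ref{main-result-valuation-rings}, suppose $v(x_u)=-n<0$ is minimal, and set $y_i:=\pi^n x_i\in A$, so at least one $y_i$ is a unit and $y_1^2+y_2^2=\pi^{2n}$. Reduction mod $\pi$ forces $\bar y_1=\bar y_2=1$ in $\mathbb{F}_2$; writing $y_i=1+\pi s_i$ with $s_i\in A$ yields $y_1^2+y_2^2=\pi^2\bigl[(1+s_1^2+s_2^2)+\pi(s_1+s_2)\bigr]$. For $n\geq 2$ the bracket must lie in $\pi^2A$; reducing it mod $\pi$ forces $\bar s_1+\bar s_2=1$, and then writing (WLOG) $s_1=1+\pi t_1$, $s_2=\pi t_2$ and re-expanding gives $\pi$ as the leading term of the bracket, a valuation of exactly~$1$ --- contradiction. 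Hence $n\leq 1$ and $x_i\in A'$.

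It remains to $2$-color $\Gamma({A'}^2)$; the extension to $\Gamma(K^2)$ is then automatic by choosing coset representatives of $K^2/{A'}^2$, exactly as in the proof of \ref{main-result-valuation-rings}. Every $x\in A'$ has a unique decomposition $x=a+b\pi^{-1}$ with $a,b\in\mathbb{Z}_2$; define the group homomorphism $\phi:{A'}^2\to\mathbb{F}_2$ by $\phi(x_1,x_2):=\bar a_1+\bar a_2+\bar b_1$, using reduction mod~$2$. To check $\phi(v_1,v_2)=1$ whenever $v_1^2+v_2^2=1$ with $v_i\in A'$, I would expand $v_i^2=a_i^2+b_i^2/2+a_ib_i\pi$, use the $\mathbb{Q}_2$-basis $\{1,\pi\}$ of $K$ to read off the equations $a_1b_1+a_2b_2=0$ and $2(a_1^2+a_2^2)+(b_1^2+b_2^2)=2$ in $\mathbb{Z}_2$, and reduce the latter mod~$4$. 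Enumerating the parities of $(a_1,a_2,b_1,b_2)$ leaves only three admissible patterns --- exactly one $a_i$ odd with both $b_i$ even; both $a_i$ even with both $b_i$ odd; or all four odd --- and each of them yields $\bar a_1+\bar a_2+\bar b_1=1$, as required. The main obstacle is step~1: tracking valuations precisely enough to pin down every solution inside the sharp fractional ideal ${A'}^2$, on which the exotic coloring $\phi$ is then tailor-made to succeed.
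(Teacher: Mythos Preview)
Your argument is correct and follows essentially the same route as the paper: both proofs first show that any $(x_1,x_2)\in K^2$ with $x_1^2+x_2^2=1$ actually lies in $(\pi^{-1}A)^2$, then extend a $2$-coloring of $(\pi^{-1}A)^2$ to all of $K^2$ by choosing coset representatives. The difference is only in the final step: the paper passes to the $16$-vertex quotient $(\pi^{-1}A/\pi A)^2$ and writes down an explicit bipartition, whereas you give the coloring directly as the $\mathbb{F}_2$-linear form $\phi=\bar a_1+\bar a_2+\bar b_1$ and verify $\phi=1$ on unit vectors by the mod-$4$ parity analysis. Your formulation is somewhat cleaner---it makes transparent that the Cayley graph on $(\mathbb{F}_2)^4$ with connection set $\{01,10,UU,VV\}$ is bipartite because a linear form sends every generator to~$1$---and it incidentally sidesteps a typo in the paper's listed bipartition (the vertices written $U0,V1$ should read $0U,1V$; as printed, $U0$ and $U1$ are adjacent yet share a color).
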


\begin{proof}
The lower bound is trivial: we wish to prove that
$\chi(\mathbb{Q}_2(\sqrt{2})^2) \leq 2$.

We will use the valuation $v$ on $K := \mathbb{Q}_2(\sqrt{2})$
normalized by: $v(\sqrt{2}) = \frac{1}{2}$, that is, $v(2) = 1$
(extending the valuation on $\mathbb{Q}_2$).

We first wish to show the following: if $x_1^2 + x_2^2 = 1$ in $K$,
then $v(x_i) \geq -\frac{1}{2}$ for both $i$.  Indeed, assume to the
contrary that $x_1^2 + x_2^2 = 1$ with $v(x_1) \leq v(x_2) \leq -1$.
Then $1 + (x_2/x_1)^2 = 1/x_1^2$ with $v(x_2/x_1) \geq 0$ and
$v(1/x_1^2) \geq 2$.  But the largest valuation that $1 + y^2$ can
take as $y$ ranges over $A := \mathbb{Z}_2[\sqrt{2}] = \{y \in K :
v(y) \geq 0\}$ is $\frac{3}{2}$, as can be checked by putting $y = a +
b \sqrt{2} + c\cdot 2$ with $a,b\in\{0,1\}$ and $c\in A$: then $1 +
y^2$ equals $1 + a^2 + b^2 \cdot 2 + ab\cdot(2\sqrt{2})$ plus terms of
valuation at least $2$ (i.e., zero mod $4$), and by considering all
four cases of values of $a,b$, we see that the largest valuation is
attained for $a=b=1$, namely $\frac{3}{2}$ (the valuation
of $2\sqrt{2}$).  This contradiction concludes the claim of the
paragraph.

Now as in the proof of \ref{main-result-valuation-rings}, we define an
equivalence relation on $K^2$, this time by putting $(x_1,x_2) \approx
(x'_1,x'_2)$ iff $v(x'_i-x_i) \geq -\frac{1}{2}$ for both $i$, i.e.,
$x'_1-x_1$ and $x'_2-x_2$ both belong to $\frac{1}{\sqrt{2}} A = \{y
\in K : v(y) \geq -\frac{1}{2}\}$.  From what we have just seen, each
edge of $\Gamma(K^d)$ connects two points in the same equivalence
class for $\approx$.  Again, by choosing a representative in each
equivalence class, we get $\chi(K^d) = \chi((\frac{1}{\sqrt{2}} A)^2)$
where $\chi((\frac{1}{\sqrt{2}} A)^2)$ is the chromatic number of the
induced subgraph of $\Gamma(K^d)$ consisting of vertices both of whose
coordinates belong to $\frac{1}{\sqrt{2}} A$ (=have
valuation $\geq-\frac{1}{2}$).  There is a slight difficulty, though,
in that $\frac{1}{\sqrt{2}} A$ is not a ring, so we can't immediately
apply \ref{observation-morphisms}.

To proceed with caution, let us define a new equivalence relation
$\equiv$ on $\frac{1}{\sqrt{2}} A$ by $x \equiv y$ iff $v(y-x) \geq
\frac{1}{2}$, that is, $y = x+\varepsilon$ with $v(\varepsilon) \geq
\frac{1}{2}$.  Note that $x \equiv y$ and $x' \equiv y'$ imply $x+x'
\equiv y+y'$, that is, $(\frac{1}{\sqrt{2}} A)/\equiv$ is an additive
group; in fact, it is the Klein four-element group represented by
$\{0,1,\frac{1}{\sqrt{2}}, \frac{1}{\sqrt{2}}+1\}$ where every element
has order $2$.  Now, if $y = x+\varepsilon$ with $v(\varepsilon) \geq
\frac{1}{2}$, we have $y^2 = x^2 + 2\varepsilon x + \varepsilon^2$ and
$v(2\varepsilon x) = 1 + v(\varepsilon) + v(x) \geq 1$ (because $v(x)
\geq -\frac{1}{2}$) and $v(\varepsilon^2) \geq 1$, so $v(y^2 - x^2)
\geq 1$.  We further define $\equiv$ on $(\frac{1}{\sqrt{2}} A)^2$ by
$(x_1,x_2) \equiv (y_1,y_2)$ iff $x_1 \equiv y_1$ and $x_2 \equiv y_2$
and then we therefore have $v((y_1^2+y_2^2) - (x_1^2+x_2^2)) \geq 1$.
In particular, if $v(x_1^2+x_2^2 - 1) \geq 1$, then $v(y_1^2+y_2^2 -
1) \geq 1$ also, and we define a graph $\Gamma((\frac{1}{\sqrt{2}}
A)^2/\equiv)$ with set of vertices $(\frac{1}{\sqrt{2}} A)^2/\equiv$
by connecting $(x_1,x_2)$ and $(x'_1,x'_2)$ with an edge whenever
$(x'_1-x_1)^2 + (x'_2-x_2)^2$ is equal to $1$ mod $2$ (i.e.,
$v((x'_1-x_1)^2 + (x'_2-x_2)^2 - 1) \geq 1$).  From what we have just
seen, this does not depend on the equivalence class of $x$ or $x'$
for $\equiv$ (we use the fact that if $x\equiv y$ and $x'\equiv y'$
then $(x'-x) \equiv (y'-y)$).  Since reduction mod $\equiv$ is
obviously graph homomorphism, the chromatic number
$\chi((\frac{1}{\sqrt{2}} A)^2/\equiv)$ of the graph just defines is
at least $\chi((\frac{1}{\sqrt{2}} A)^2)$, which we saw is equal to
$\chi(K^2)$.

But $\Gamma((\frac{1}{\sqrt{2}} A)^2/\equiv)$ is a finite graph.  It
has sixteen vertices, which are represented by
$\{0,1,\frac{1}{\sqrt{2}}, \frac{1}{\sqrt{2}}+1\}^2$.  For
conciseness, we will write $U$ for $\frac{1}{\sqrt{2}}$ and $V$ for
$\frac{1}{\sqrt{2}}+1$ and concatenate both coordinates of the
vertices.  The vertices adjacent to the origin $00$ are: $01$, $10$,
$UU$ (because $(\frac{1}{\sqrt{2}})^2 + (\frac{1}{\sqrt{2}})^2 = 1$)
and $VV$ (because $(\frac{1}{\sqrt{2}}+1)^2 + (\frac{1}{\sqrt{2}}+1)^2
= 1 + 2 + 2\sqrt{2}$), and other adjacency relations are obtained by
translation (remembering that each coordinate is in a Klein
four-group).  A coloring of the graph $\Gamma((\frac{1}{\sqrt{2}}
A)^2/\equiv)$ with two colors is obtained by giving one color to the
eight vertices $00$, $11$, $U0$, $V1$, $U1$, $V0$, $UV$, $VU$, and the
other color to the eight other vertices.
\end{proof}

\section{Explicit values and bounds for certain fields}\label{section-explicit-values}

\thingy\label{chromatic-number-of-f3} It is easy to see that
$\chi((\mathbb{F}_3)^2) = 3$: it is no less because the points
$(0,0)$, $(1,0)$ and $(2,0)$ form a triangle, and it is no more
because one can color $(u,v)$ with color $u+v$ (mod $3$).

\begin{prop}\label{chromatic-number-of-q-sqrt-3}
The chromatic number $\chi(\mathbb{Q}(\sqrt{3})^2)$ of the plane with
coordinates in $\mathbb{Q}(\sqrt{3})^2$ is exactly $3$.
\end{prop}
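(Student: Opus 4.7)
The plan is to show both inequalities separately, leveraging the machinery already developed.

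For the lower bound $\chi(\mathbb{Q}(\sqrt{3})^2) \geq 3$, I would apply Lemma \ref{embedding-triangle} directly: since $3$ is (tautologically) a square in $\mathbb{Q}(\sqrt{3})$, the three points $(0,0)$, $(1,0)$, $(\tfrac{1}{2},\tfrac{\sqrt{3}}{2})$ already provide a homomorphism from the triangle $C_3$ into $\Gamma(\mathbb{Q}(\sqrt{3})^2)$.

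For the upper bound $\chi(\mathbb{Q}(\sqrt{3})^2) \leq 3$, the strategy indicated by Remark \ref{remark-p-adic-fields} is to pass to the $3$-adic completion. Concretely, I would observe that $\mathbb{Q}(\sqrt{3})$ embeds into $K := \mathbb{Q}_3(\sqrt{3})$ (a totally ramified quadratic extension of $\mathbb{Q}_3$, since $X^2 - 3$ is Eisenstein at $3$), so by the functoriality of \ref{observation-morphisms}, it suffices to show $\chi(K^2) \leq 3$. Let $A = \mathbb{Z}_3[\sqrt{3}]$ be the valuation ring of $K$, with uniformizer $\sqrt{3}$ and residue field $\kappa = A/(\sqrt{3}) = \mathbb{F}_3$. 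Since $3 \equiv 3 \pmod{4}$, the element $-1$ is not a square in $\mathbb{F}_3$, so the binary quadratic form $z_1^2 + z_2^2$ is anisotropic over $\mathbb{F}_3$. Proposition \ref{main-result-valuation-rings} then applies and yields $\chi(K^2) \leq \chi(\mathbb{F}_3^2)$, and by \ref{chromatic-number-of-f3} the right-hand side equals $3$.

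Chaining the inequalities gives $3 \leq \chi(\mathbb{Q}(\sqrt{3})^2) \leq \chi(K^2) \leq \chi(\mathbb{F}_3^2) = 3$, which closes the proof.

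Compared with the previous proposition on $\mathbb{Q}(\sqrt{2})$, this argument is essentially immediate, because the relevant extension $\mathbb{Q}_3(\sqrt{3})/\mathbb{Q}_3$ is ramified but the residue field is unchanged and already satisfies the anisotropy hypothesis of Proposition \ref{main-result-valuation-rings}; no ``mod $\mathfrak{m}^2$'' refinement is needed. So there is no real obstacle, only the small verification that $\sqrt{3}$ is indeed a uniformizer of the $3$-adic extension and that the residue field is $\mathbb{F}_3$, both of which are standard.
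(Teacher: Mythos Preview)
Your argument is correct and essentially the same as the paper's: the paper invokes Corollary~\ref{main-result-reduction-number-fields} with the prime $(\sqrt{3})$ of $\mathbb{Z}[\sqrt{3}]$ (residue field $\mathbb{F}_3$), while you go via the $3$-adic completion $\mathbb{Q}_3(\sqrt{3})$ and apply Proposition~\ref{main-result-valuation-rings} directly, which Remark~\ref{remark-p-adic-fields} already notes is an equivalent route. The lower bound via Lemma~\ref{embedding-triangle} and the input $\chi((\mathbb{F}_3)^2)=3$ from~\ref{chromatic-number-of-f3} are identical to the paper's.
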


\begin{proof}
By \ref{embedding-triangle}, we know that
$\chi(\mathbb{Q}(\sqrt{3})^2) \geq 3$.

On the other hand, the ring of integers of $\mathbb{Q}(\sqrt{3})$ is
$\mathbb{Z}[\sqrt{3}]$, and $\sqrt{3}$ generates a prime ideal with
residue field $\mathbb{F}_3$.  Now $\chi((\mathbb{F}_3)^2) = 3$ as we
have just noted, so \ref{main-result-reduction-number-fields} gives
$\chi(\mathbb{Q}(\sqrt{3})^2) \leq 3$.
\end{proof}

The following proposition exhibits a situation where the chromatic
number can be computed exactly but is not equal to the clique
number\footnote{The clique number $\omega(G)$ of a graph $G$ is the
  largest $n$ for which there exists a graph homomorphism $K_n \to G$,
  or $\infty$ if there is no largest $n$.  Evidently, $\omega(G) \leq
  \chi(G)$.  When $\omega(G) = 2$, the graph is said to be
  triangle-free.}:

\begin{prop}\label{chromatic-number-of-q-sqrt-7}
If $K = \mathbb{Q}(\sqrt{7})$, then $\Gamma(K^2)$ is triangle-free,
but its chromatic number $\chi(K^2)$ is still exactly $3$.
\end{prop}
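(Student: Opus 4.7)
The plan is threefold: (i) show $\Gamma(K^2)$ is triangle-free; (ii) show $\chi(K^2) \leq 3$ by reducing modulo a prime above $3$; (iii) show $\chi(K^2) \geq 3$ by exhibiting a homomorphism from an odd cycle.

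For (i), three points $P_0, P_1, P_2 \in K^2$ at pairwise unit distance would, after translating $P_0$ to the origin, give unit vectors $P_1, P_2$ with $\langle P_1, P_2\rangle = 1/2$ (from expanding $|P_1-P_2|^2=1$). The Lagrange identity then yields $\det(P_1, P_2)^2 = 1 - 1/4 = 3/4$, so $\sqrt{3} \in K = \mathbb{Q}(\sqrt{7})$, contradicting the fact that $3/7$ is not a rational square. For (ii), since $7 \equiv 1 \pmod 3$ is a square mod $3$, the prime $3$ splits in $\mathcal{O}_K = \mathbb{Z}[\sqrt{7}]$ (which is the full ring of integers because $7 \equiv 3 \pmod 4$) with residue field $\mathbb{F}_3$ at each prime above it. As $3 \equiv 3 \pmod 4$, corollary~\ref{main-result-reduction-number-fields} combined with~\ref{chromatic-number-of-f3} yields $\chi(K^2) \leq \chi((\mathbb{F}_3)^2) = 3$.

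For (iii), a graph homomorphism $C_5 \to \Gamma(K^2)$ suffices, since $\chi(C_5) = 3$. The concrete $5$-cycle I would use is given by the points
\[
P_0 = (0,0),\ P_1 = (1,0),\ P_2 = \bigl(\tfrac{1}{4}, \tfrac{\sqrt{7}}{4}\bigr),\ P_3 = \bigl(-\tfrac{1}{2}, \tfrac{\sqrt{7}}{2}\bigr),\ P_4 = \bigl(\tfrac{\sqrt{7}-1}{4}, \tfrac{\sqrt{7}+1}{4}\bigr),
\]
with edges joining consecutive $P_i$ (and $P_4$ back to $P_0$), each easily verified to be at unit distance. The main obstacle is producing such a cycle at all: one needs five unit vectors in $K^2$ summing to zero. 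My strategy is to build a unit-distance path $P_0 \to P_1 \to P_2 \to P_3$ using the two $K$-unit vectors $(1,0)$ and $(-3/4,\sqrt{7}/4)$, then solve for $P_4$ on the intersection of the unit circles around $P_0$ and $P_3$; this amounts to a quadratic in one unknown whose roots lie in $K$ only if its discriminant is a square there, and the path above is chosen precisely so that the discriminant simplifies to $16$, a rational square, putting the two candidate $P_4$'s in $K^2$.
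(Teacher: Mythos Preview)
Your proof is correct. The upper bound argument is essentially identical to the paper's (the paper names the specific generator $\sqrt{7}-2$ of a prime above $3$, but the content is the same). Your triangle-free argument via the Lagrange identity $\det(P_1,P_2)^2 = |P_1|^2|P_2|^2 - \langle P_1,P_2\rangle^2$ is a clean variant of the paper's approach, which instead applies the rotation matrix $\begin{pmatrix} v_1 & v_2 \\ -v_2 & v_1\end{pmatrix}$ to normalize one vertex to $(1,0)$ and then solve directly for $w_2^2 = 3/4$; both reduce to the nonexistence of $\sqrt{3}$ in $K$.

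The genuine difference is in the lower bound. The paper exhibits a $9$-cycle, namely the zigzag
\[
(0,0),\ \bigl(\tfrac{1}{8},\tfrac{3\sqrt{7}}{8}\bigr),\ \bigl(\tfrac{1}{4},0\bigr),\ \bigl(\tfrac{3}{8},\tfrac{3\sqrt{7}}{8}\bigr),\ \ldots,\ (1,0),
\]
built from just the two unit vectors $(\pm\tfrac{1}{8},\tfrac{3\sqrt{7}}{8})$ and the single closing step $(1,0)$; the pattern is mechanical and generalizes easily. Your $5$-cycle is shorter but required solving for a nontrivial closing vertex $P_4$ and checking that the relevant discriminant lands in $(K^\times)^2$. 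Both are perfectly valid odd-cycle witnesses; the paper's trades a longer cycle for a more transparent construction, while yours trades a bit of computation for brevity.
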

\begin{proof}
First we check that $\Gamma(K^2)$ is triangle-free.  Assume it
contains a triangle $u,v,w$.  By translating, we can assume that $u$
is the origin.  Let us explain why we can assume that $v = (1,0)$:
\textit{a priori} we have $v = (v_1,v_2)$ with $v_1^2 + v_2^2 = 1$,
but then the matrix
\[
\left(\begin{matrix}v_1&v_2\\-v_2&v_1\end{matrix}\right)
\]
(acting from the left on column vectors) has values in $K$, preserves
the quadratic form $x_1^2 + x_2^2$, and takes $v$ to $(1,0)$.
(Essentially, we are saying that once we have a vector of unit norm
with values in $K$, the rotation taking that vector to $(1,0)$ also
has values in $K$.)  So we are left with $w = (w_1,w_2)$ which
satisfies $w_1^2 + w_2^2 = 1$ and $(w_1-1)^2 + w_2^2 = 1$, giving
$w_1=\frac{1}{2}$ and $w_2^2 = \frac{3}{4}$.  Since $3$ is not a
square in $K$, the latter has no solution and there is no triangle.

To prove the lower bound $\chi(K^2) \geq 3$, we construct an odd cycle
in $\Gamma(K^2)$: namely, $(0,0)$,
$(\frac{1}{8},\frac{3\sqrt{7}}{8})$, $(\frac{1}{4},0)$,
$(\frac{3}{8},\frac{3\sqrt{7}}{8})$, $(\frac{1}{2},0)$,
$(\frac{5}{8},\frac{3\sqrt{7}}{8})$, $(\frac{3}{4},0)$,
$(\frac{7}{8},\frac{3\sqrt{7}}{8})$, $(1,0)$.  It is straightforward
to check that two consecutive vertices of these nine (taken
cyclically) are at distance $1$, so we have a graph homomorphism from
$C_9$ to $\Gamma(K^2)$.  This shows $\chi(K^2) \geq \chi(C_9) = 3$.

As for the upper bound: the ring of integers of $\mathbb{Q}(\sqrt{7})$
is $\mathbb{Z}[\sqrt{7}]$, and $\sqrt{7} - 2$ generates a prime ideal
with residue field $\mathbb{F}_3$.  Now $\chi((\mathbb{F}_3)^2) = 3$
(again, \ref{chromatic-number-of-f3}), so
\ref{main-result-reduction-number-fields} gives
$\chi(\mathbb{Q}(\sqrt{7})^2) \leq 3$.
\end{proof}

The previous examples are all subfields of $\mathbb{R}$ for which we
have the upper bound $\chi(\mathbb{R}^2) \leq 7$ well-known in the
Hadwiger-Nelson problem.  But the reasoning used also works for
certain non real number fields:

\begin{prop}
If $K = \mathbb{Q}(\sqrt{-5})$, then $\Gamma(K^2)$ is triangle-free,
but its chromatic number $\chi(K^2)$ is still exactly $3$.
\end{prop}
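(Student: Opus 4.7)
The plan is to mirror the proof of Proposition \ref{chromatic-number-of-q-sqrt-7}: I would verify triangle-freeness via the same rotation trick, establish the lower bound $\chi(K^2) \geq 3$ by exhibiting an explicit odd unit-distance cycle in $\Gamma(K^2)$, and obtain the upper bound $\chi(K^2) \leq 3$ from corollary \ref{main-result-reduction-number-fields} by locating a prime of $\mathcal{O}_K = \mathbb{Z}[\sqrt{-5}]$ with residue field $\mathbb{F}_3$.

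For triangle-freeness I would repeat the argument used for $\sqrt{7}$ essentially verbatim: given an alleged triangle $u,v,w$, translate so that $u$ is the origin, then apply the rotation matrix with rows $(v_1,v_2)$ and $(-v_2,v_1)$, which has entries in $K$, preserves $x_1^2+x_2^2$, and sends $v$ to $(1,0)$ since $v_1^2+v_2^2=1$. The third vertex $w$ then satisfies $w_1=\tfrac{1}{2}$ and $w_2^2=\tfrac{3}{4}$, so the whole argument reduces to checking that $3$ is not a square in $K = \mathbb{Q}(\sqrt{-5})$. A rational number $n$ is a square in $\mathbb{Q}(\sqrt{d})$ iff either $n$ or $n/d$ is a rational square, and here neither $3$ nor $-3/5$ qualifies (the latter being negative), so indeed $\sqrt{3}\notin K$ and no triangle exists.

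For the lower bound I would write down the explicit $5$-cycle with vertices $(0,0)$, $(\tfrac{3}{2},\tfrac{\sqrt{-5}}{2})$, $(3,0)$, $(2,0)$, $(1,0)$, traversed in that order. The key identity $(\tfrac{3}{2})^2 + (\tfrac{\sqrt{-5}}{2})^2 = \tfrac{9}{4} - \tfrac{5}{4} = 1$, together with its translate by $(\tfrac{3}{2},-\tfrac{\sqrt{-5}}{2})$, handles the two ``slanted'' edges, and the remaining three edges along the $x$-axis are trivially of unit length. This supplies a graph homomorphism $C_5 \to \Gamma(K^2)$, so $\chi(K^2) \geq \chi(C_5) = 3$.

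For the upper bound, the congruence $-5\equiv 1 \pmod{3}$ makes $3$ split in $\mathcal{O}_K = \mathbb{Z}[\sqrt{-5}]$ (since $x^2+5 \equiv (x-1)(x+1)\pmod 3$): explicitly, $\mathfrak{p} = (3, \sqrt{-5}-1)$ is a maximal ideal with residue field $\mathbb{F}_3$. Since $3 \equiv 3\pmod 4$, corollary \ref{main-result-reduction-number-fields} applies and yields $\chi(K^2) \leq \chi((\mathbb{F}_3)^2) = 3$, the latter value being \ref{chromatic-number-of-f3}. The main (and rather mild) obstacle is producing the odd cycle: the algebraic relation $3^2+(\sqrt{-5})^2=4$ naturally supplies only an \emph{even} cycle $(0,0)$--$(\tfrac{3}{2},\tfrac{\sqrt{-5}}{2})$--$(3,0)$--$(\tfrac{3}{2},-\tfrac{\sqrt{-5}}{2})$ of length four, and the trick is to break parity by elongating it along the $x$-axis through the stepping-stones $(2,0)$ and $(1,0)$.
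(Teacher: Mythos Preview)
Your proposal is correct and follows essentially the same route as the paper: the triangle-freeness argument is identical, your $5$-cycle uses the same five vertices (just listed in a different cyclic order), and your prime $(3,\sqrt{-5}-1)$ is simply the conjugate of the paper's $(3,\sqrt{-5}+1)$, either of which works. The only addition is your explicit justification that $3$ is not a square in $\mathbb{Q}(\sqrt{-5})$, which the paper leaves implicit.
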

\begin{proof}
The proof that $\Gamma(K^2)$ is triangle-free is the same as
in \ref{chromatic-number-of-q-sqrt-7}: again, $3$ is not a square
in $K$.

To prove the lower bound, use the following odd cycle: $(0,0)$,
$(1,0)$, $(2,0)$, $(3,0)$ and $(\frac{3}{2},\frac{\sqrt{-5}}{2})$:
this gives a graph homomorphism from $C_5$ to $\Gamma(K^2)$.

For the upper bound: the ring of integers of $\mathbb{Q}(\sqrt{-5})$
is $\mathbb{Z}[\sqrt{-5}]$, and $(3,\, \sqrt{-5}+1)$ is a prime ideal
with residue field $\mathbb{F}_3$.  So the conclusion follows once
again by \ref{main-result-reduction-number-fields}
and \ref{chromatic-number-of-f3}.
\end{proof}

The previous examples all used the fact (\ref{chromatic-number-of-f3})
that $\chi((\mathbb{F}_3)^2) = 3$.  Unfortunately, there aren't that
many finite fields that \emph{can} be used to produce a meaningful
upper bound.  Here, however, is an example of a subfield
of $\mathbb{R}$ where we can give a lower bound that is greater
than $3$ and an upper bound that is better than the standard upper
bound on $\chi(\mathbb{R}^2)$ (viz. $7$):

\begin{lem}\label{chromatic-number-of-f5}
We have\footnote{In fact, $\chi((\mathbb{F}_{11})^2) = 5$, but we will
  neither prove nor use this.} $\chi((\mathbb{F}_{11})^2) \leq 5$.
\end{lem}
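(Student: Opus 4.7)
The plan is to exhibit an explicit $5$-coloring of $\Gamma((\mathbb{F}_{11})^2)$, that is, a partition of $(\mathbb{F}_{11})^2$ into five independent sets.  One first notes that $\Gamma((\mathbb{F}_{11})^2)$ is the Cayley graph on the additive group $(\mathbb{F}_{11})^2$ with connection set $S := \{(a,b) : a^2+b^2=1\}$, and a direct enumeration (using that the nonzero squares in $\mathbb{F}_{11}$ are $\{1,3,4,5,9\}$) shows that $|S|=12$, namely $(\pm1,0)$, $(0,\pm1)$, $(\pm3,\pm5)$, $(\pm5,\pm3)$.  By translation-invariance of $\Gamma$, verifying a candidate coloring reduces to checking, for each color class $C$, that $(C+s)\cap C=\emptyset$ for every $s\in S$.

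Before writing down a coloring, it is worth understanding why no purely one-dimensional construction works.  Suppose one tries to color via a linear functional $\lambda\colon(\mathbb{F}_{11})^2\to\mathbb{F}_{11}$ whose kernel avoids $S$; such kernels are the six lines through the origin with slope $m$ satisfying $1+m^2$ a non-square in $\mathbb{F}_{11}$, namely $m\in\{1,3,4,7,8,10\}$.  A short case-by-case computation shows that $\lambda(S)$ is always a six-element set of the form $\{\pm a,\pm b,\pm c\}$, and the resulting quotient Cayley graph on $\mathbb{F}_{11}$ is (up to isomorphism) one of the circulants $C_{11}(1,2,3)$, $C_{11}(1,3,4)$, or $C_{11}(1,4,5)$; each of these is easily checked to have maximum independent set of size $2$, hence chromatic number at least $\lceil 11/2\rceil=6$.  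Consequently, any valid $5$-coloring must genuinely exploit the two-dimensional structure.

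The remaining task is thus to produce an explicit $5$-coloring by hand or otherwise.  This can be done by computer search (manageable given only $121$ vertices), or by exploiting the rich automorphism group of $\Gamma((\mathbb{F}_{11})^2)$, which contains all translations, the cyclic group of order $12$ of ``rotations'' given by multiplication in $\mathbb{F}_{121}=\mathbb{F}_{11}(i)$ by the norm-one elements, and the Frobenius involution $x\mapsto x^{11}$; restricting to colorings invariant under some subgroup of these automorphisms considerably cuts down the search space.  The principal obstacle is precisely this combinatorial construction; once the five color classes are written down, verification reduces to a finite and mechanical check of the $5\times 12=60$ disjointness conditions $(C_i+s)\cap C_i=\emptyset$.  (The conceptually harder matching lower bound $\chi((\mathbb{F}_{11})^2)\geq 5$, mentioned by the author in a footnote, is not needed for the inequality being proved here.)
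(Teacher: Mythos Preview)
Your preliminary analysis is correct and even illuminating: the Cayley-graph description, the enumeration of the connection set $S$, and the observation that no coloring factoring through a linear functional $\lambda\colon(\mathbb{F}_{11})^2\to\mathbb{F}_{11}$ can succeed are all valid and well argued.  However, the proposal has a genuine gap: you never actually exhibit a $5$-coloring.

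The entire content of the lemma is the existence claim, and the paper's proof consists precisely of writing down an explicit $11\times 11$ table assigning to each point of $(\mathbb{F}_{11})^2$ a color in $\{0,1,2,3,4\}$ and inviting the reader to verify the $60$ disjointness conditions.  Your text says ``the remaining task is thus to produce an explicit $5$-coloring\ldots\ this can be done by computer search'' and then describes how the automorphism group could be used to reduce the search space --- but the search is never carried out, and no coloring is displayed.  Saying that a coloring \emph{can} be found is not the same as producing one; the statement $\chi((\mathbb{F}_{11})^2)\leq 5$ is not obvious a priori (indeed, your own argument rules out the most natural family of candidates), and a proof must supply the witness.  As it stands, what you have written is a plan rather than a proof: the crucial combinatorial object is missing, and without it the argument is incomplete.
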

\begin{proof}
Consider the following table:
\[
\begin{array}{ccccccccccc}
 \mathtt{3}  &  \mathtt{1}  & \llap{$*$}\mathtt{0}\rlap{$*$} &  \mathtt{2}  &  \mathtt{1}  &  \mathtt{2}  &  \mathtt{3}  &  \mathtt{4}  & \llap{$*$}\mathtt{2}\rlap{$*$} &  \mathtt{0}  &  \mathtt{1}  \\
 \mathtt{1}  &  \mathtt{2}  &  \mathtt{1}  &  \mathtt{0}  &  \mathtt{4}  &  \mathtt{1}  &  \mathtt{2}  &  \mathtt{3}  &  \mathtt{4}  &  \mathtt{3}  &  \mathtt{2}  \\
\llap{$*$}\mathtt{2}\rlap{$*$} &  \mathtt{1}  &  \mathtt{2}  &  \mathtt{3}  &  \mathtt{0}  &  \mathtt{2}  &  \mathtt{1}  &  \mathtt{2}  &  \mathtt{3}  &  \mathtt{4}  & \llap{$*$}\mathtt{0}\rlap{$*$} \\
 \mathtt{0}  &  \mathtt{2}  &  \mathtt{3}  &  \mathtt{1}  &  \mathtt{3}  &  \mathtt{4}  &  \mathtt{0}  &  \mathtt{4}  &  \mathtt{1}  &  \mathtt{3}  &  \mathtt{4}  \\
 \mathtt{4}  &  \mathtt{0}  &  \mathtt{1}  &  \mathtt{2}  &  \mathtt{1}  & \llap{$*$}\mathtt{3}\rlap{$*$} &  \mathtt{4}  &  \mathtt{0}  &  \mathtt{2}  &  \mathtt{1}  &  \mathtt{2}  \\
 \mathtt{3}  &  \mathtt{4}  &  \mathtt{0}  &  \mathtt{1}  & \llap{$*$}\mathtt{2}\rlap{$*$} & \llap{$\bullet$\!}\mathtt{1}\rlap{\!$\bullet$} & \llap{$*$}\mathtt{3}\rlap{$*$} &  \mathtt{4}  &  \mathtt{1}  &  \mathtt{2}  &  \mathtt{1}  \\
 \mathtt{1}  &  \mathtt{2}  &  \mathtt{3}  &  \mathtt{4}  &  \mathtt{0}  & \llap{$*$}\mathtt{3}\rlap{$*$} &  \mathtt{1}  &  \mathtt{0}  &  \mathtt{4}  &  \mathtt{0}  &  \mathtt{2}  \\
 \mathtt{2}  &  \mathtt{1}  &  \mathtt{4}  &  \mathtt{3}  &  \mathtt{4}  &  \mathtt{0}  &  \mathtt{2}  &  \mathtt{3}  &  \mathtt{0}  &  \mathtt{3}  &  \mathtt{4}  \\
\llap{$*$}\mathtt{4}\rlap{$*$} &  \mathtt{2}  &  \mathtt{3}  &  \mathtt{4}  &  \mathtt{3}  &  \mathtt{4}  &  \mathtt{0}  &  \mathtt{2}  &  \mathtt{3}  &  \mathtt{4}  & \llap{$*$}\mathtt{0}\rlap{$*$} \\
 \mathtt{0}  &  \mathtt{4}  &  \mathtt{1}  &  \mathtt{2}  &  \mathtt{1}  &  \mathtt{0}  &  \mathtt{4}  &  \mathtt{0}  &  \mathtt{2}  &  \mathtt{3}  &  \mathtt{2}  \\
 \mathtt{4}  &  \mathtt{0}  & \llap{$*$}\mathtt{4}\rlap{$*$} &  \mathtt{1}  &  \mathtt{2}  &  \mathtt{3}  &  \mathtt{0}  &  \mathtt{3}  & \llap{$*$}\mathtt{0}\rlap{$*$} &  \mathtt{1}  &  \mathtt{3}  \\
\end{array}
\]
it is an $11\times 11$ array of numbers from $\mathtt{0}$ to
$\mathtt{4}$ (representing five colors): if the rows and columns are
identified cyclically with elements of $\mathbb{F}_{11}$ (the starting
row/column and the order in which they are read is, of course,
irrelevant), then one can check that two squares whose cyclic row
distance $u_1$ and cyclic column distance $u_2$ are related by $u_1^2
+ u_2^2 = 1$ never contain the same number; to make it perhaps easier
to check this fact by hand, we have marked with asterisks the $12$
squares which are connected by an edge to the central one (itself
marked with bullets): so one should check that no square marked with
an asterisk contains the same number as that marked with bullets, and
similarly for any cyclic translation of this pattern.
\end{proof}

\begin{prop}\label{chromatic-number-of-q-sqrt-3-and-11}
If $K = \mathbb{Q}(\sqrt{3},\sqrt{11})$, then
$4 \leq \chi(K^2) \leq 5$.
\end{prop}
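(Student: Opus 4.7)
The plan is to obtain the lower bound from Lemma \ref{embedding-mosers-spindle} and the upper bound from Corollary \ref{main-result-reduction-number-fields} combined with Lemma \ref{chromatic-number-of-f5}.

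For the lower bound $\chi(K^2) \geq 4$: by construction $K$ contains both $\sqrt{3}$ and $\sqrt{11}$ and has characteristic $0$, so Lemma \ref{embedding-mosers-spindle} applies verbatim to produce a homomorphism from Moser's spindle into $\Gamma(K^2)$, which immediately yields $\chi(K^2) \geq 4$.

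For the upper bound $\chi(K^2) \leq 5$: observe that $11 \equiv 3 \pmod{4}$, so Corollary \ref{main-result-reduction-number-fields} together with Lemma \ref{chromatic-number-of-f5} will give $\chi(K^2) \leq \chi((\mathbb{F}_{11})^2) \leq 5$ as soon as we exhibit a maximal ideal $\mathfrak{p}$ of $\mathcal{O}_K$ with residue field $\mathbb{F}_{11}$. The arithmetic input needed is that $3$ is a square modulo $11$, since $5^2 = 25 \equiv 3 \pmod{11}$. By Hensel's lemma this means $\sqrt{3} \in \mathbb{Q}_{11}$; equivalently, $11$ splits in the subfield $\mathbb{Q}(\sqrt{3})$. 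Combined with the fact that $11$ is (totally) ramified in $\mathbb{Q}(\sqrt{11})$, this shows that in $K$ the prime $11$ decomposes as $\mathfrak{p}_1^2 \mathfrak{p}_2^2$ with each $\mathfrak{p}_i$ of residue degree~$1$. Concretely, one may take $\mathfrak{p} = (\sqrt{11},\, \sqrt{3} - 5)$: modulo this ideal $\sqrt{11}$ maps to $0$ and $\sqrt{3}$ maps to $5$, forcing the quotient to be $\mathbb{F}_{11}$; the completion $K_{\mathfrak{p}}$ is then $\mathbb{Q}_{11}(\sqrt{11})$ with residue field $\mathbb{F}_{11}$, as required.

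No step is really an obstacle: the entire argument amounts to assembling the infrastructure already developed around the single elementary congruence $5^2 \equiv 3 \pmod{11}$. The one technical nuisance to keep in mind is that the ring of integers of a biquadratic field need not be generated over $\mathbb{Z}$ by the naive square roots, so a rigorous verification of the residue field might require some care; however, since Proposition \ref{main-result-valuation-rings} holds for any valuation ring, this can be sidestepped by working directly with the completion $\mathbb{Q}_{11}(\sqrt{11})$ in place of a localization of $\mathcal{O}_K$, exactly as in Remark \ref{remark-p-adic-fields}.
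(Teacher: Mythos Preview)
Your proof is correct and follows the same approach as the paper: Moser's spindle for the lower bound, and Corollary~\ref{main-result-reduction-number-fields} together with Lemma~\ref{chromatic-number-of-f5} for the upper bound. The only difference is in how the prime of residue field $\mathbb{F}_{11}$ is exhibited: the paper writes down an explicit principal generator $-\tfrac{5}{2} - \tfrac{\sqrt{3}}{2} + \tfrac{\sqrt{11}}{2} + \tfrac{\sqrt{33}}{2}$ in $\mathcal{O}_K$ (with an accompanying factorization of $11$), whereas you argue via the splitting of $11$ in $\mathbb{Q}(\sqrt{3})$ and its ramification in $\mathbb{Q}(\sqrt{11})$ and then pass to the completion $\mathbb{Q}_{11}(\sqrt{11})$ as in Remark~\ref{remark-p-adic-fields}, which is a perfectly legitimate way to sidestep the description of $\mathcal{O}_K$.
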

\begin{proof}
The lower bound follows from \ref{embedding-mosers-spindle}.

The upper bound, obtained by
\ref{main-result-reduction-number-fields}, uses the fact that
$\chi((\mathbb{F}_{11})^2) \leq 5$ by \ref{chromatic-number-of-f5},
and that the ideal generated by $-\frac{5}{2} - \frac{\sqrt{3}}{2} +
\frac{\sqrt{11}}{2} + \frac{\sqrt{33}}{2}$ has residue
field $\mathbb{F}_{11}$ (note that $11$ factors as $(23 + 4\sqrt{33})
\times \big(-\frac{5}{2} - \frac{\sqrt{3}}{2} + \frac{\sqrt{11}}{2} +
\frac{\sqrt{33}}{2}\big)^2 \times \big(-\frac{5}{2} +
\frac{\sqrt{3}}{2} - \frac{\sqrt{11}}{2} +
\frac{\sqrt{33}}{2}\big)^2$, where $23 + 4\sqrt{33}$ is a unit having
inverse $23 - 4\sqrt{33}$).
\end{proof}

\section{Remarks on algebraically and real closed fields}\label{section-closed-fields}

\thingy\label{remarks-algebraically-closed-fields}
In the introduction, we mention the question of computing
$\chi(\mathbb{C}^d)$.  In fact, for algebraically closed fields $E$,
the value of $\chi(E^d)$ depends only on the characteristic $p$ of $E$
and not on the field $E$ itself.  Indeed, the finite graphs $G$ for
which there exists a graph homomorphism $G \to \Gamma(E^d)$ with $E$
algebraically closed depends only on the characteristic of $E$ (and,
of course, on $d$).

Here is one way of seeing this fact: if $E$ is any field, for any
finite graph $G$ with $N$ vertices, saying that there does \emph{not}
exist a graph homomorphism $G \to \Gamma(E^d)$ means that the set of
$(N d)$-tuples of elements $(x_{\gamma,i})$ of $E$, indexed by the
vertices $\gamma$ of $G$ and $1\leq i\leq d$, subject to the relations
$(x_{\gamma',1}-x_{\gamma,1})^2 + \cdots +
(x_{\gamma',d}-x_{\gamma,d})^2 - 1 = 0$ for each edge
$\{\gamma,\gamma'\}$ of $G$, is empty.  Now if $E$ is algebraically
closed, by Hilbert's Nullstellensatz, this is equivalent to saying
that the polynomials $h_{\{\gamma,\gamma'\}} :=
(x_{\gamma',1}-x_{\gamma,1})^2 + \cdots +
(x_{\gamma',d}-x_{\gamma,d})^2 - 1$ (again, where $\{\gamma,\gamma'\}$
ranges over the set $\mathrm{E}(G)$ of edges of $G$) generate the unit
ideal of the polynomial ring $E[(x_{\gamma,i})]$ in $N d$ variables
(i.e., that we can write $\sum_{\{\gamma,\gamma'\}\in \mathrm{E}(G)}
g_{\{\gamma,\gamma'\}} h_{\{\gamma,\gamma'\}} = 1$ for some
$g_{\{\gamma,\gamma'\}} \in E[(x_{\gamma,i})]$).  But this depends
only on the characteristic.  Indeed, if there is a combination
$\sum_{\{\gamma,\gamma'\}\in \mathrm{E}(G)} g_{\{\gamma,\gamma'\}}
h_{\{\gamma,\gamma'\}} = 1$ then, for any $\mathbb{Z}$-linear form
$\lambda \colon E \to F$ such that $\lambda(1) = 1$, we have
$\sum_{\{\gamma,\gamma'\}\in \mathrm{E}(G)}
\lambda(g_{\{\gamma,\gamma'\}})\, h_{\{\gamma,\gamma'\}} = 1$ where
$\lambda(g)$ means $\lambda$ is applied to all coefficients of $g$
(note that $h_{\{\gamma,\gamma'\}}$ has integer coefficients!); now if
$E$ and $F$ are two algebraically closed fields of the same
characteristic, we can obviously find such $\lambda$.

Another possible proof notes that the statement that there does, or
does not, exist a graph homomorphism $G \to \Gamma(E^d)$ is a
first-order statement when interpreted in the field $E$, and the
(first-order) theory of algebraically closed fields of fixed
characteristic is complete, i.e., all of its models are elementarily
equivalent, so the validity of a first-order statement does not depend
on the model.  (Cf. \cite[theorem 6.4]{Poizat2000} or
\cite[chapter 9]{FriedJarden2008}.)

One consequence of the above remarks is that $\chi(\mathbb{C}^d) =
\chi((\mathbb{Q}^{\mathrm{alg}})^d)$ where $\mathbb{Q}^{\mathrm{alg}}$
stands for the algebraic closure of $\mathbb{Q}$.  Furthermore,
$\chi(\mathbb{C}^d)$ is the greatest value of the $\chi(E^d)$ for all
fields $E$ of characteristic zero, just as
$\chi((\mathbb{F}_p^{\mathrm{alg}})^d)$ is the greatest value of the
$\chi(E^d)$ for all fields of characteristic $p$.  Another fact worthy
of note is that, for any $n,d$ the fact that $\chi(\mathbb{C}^d)\geq
n$, if true, is provable (by enumerating all finite graphs $G$ until
one finds one with chromatic number $\geq n$ and which admits a
homomorphism to $\Gamma(\mathbb{C}^d)$, a fact which can be tested
using the Nullstellensatz and Gröbner bases, or some other decision
procedure for algebraically closed fields).

One could argue from the above presentation that, from an algebraic
point of view, the question of computing $\chi(\mathbb{C}^d)$, or more
generally, deciding which finite graphs admit a homomorphism to
$\Gamma(E^d)$ for an algebraically closed field $E$ of a given
characteristic, is more fundamental and perhaps more interesting than
the case $\chi(\mathbb{R}^d)$ of real closed field
(cf. \ref{remarks-real-closed-fields} below) considered by the
classical Hadwiger-Nelson problem.  Certainly, if it turns out that
$\chi(\mathbb{C}^2) = 4$, this would be a more profound result than
$\chi(\mathbb{R}^2) = 4$ (which it implies).

\thingy We have explained above why the finite graphs which admit a
homomorphism to $\Gamma(E^d)$ for $E$ an algebraically closed field
depend only on ($d$ and) the characteristic of $E$.  We can state the
following fact in comparing characteristic $0$ to the others:

If $G$ is a finite graph that admits a graph homomorphism $G \to
\Gamma((\mathbb{F}_p^{\mathrm{alg}})^d)$ for infinitely many
primes $p$ (where $\mathbb{F}_p^{\mathrm{alg}}$ refers to the
algebraic closure $\bigcup_{n=1}^{\infty} \mathbb{F}_{p^n}$ of
$\mathbb{F}_p$), then there is a graph homomorphism $G \to
\Gamma(\mathbb{C}^d)$.  Equivalently: if a given finite graph $G$
admits a homomorphism to $\Gamma(K^d)$ for fields $K$ of arbitrarily
large finite characteristic, then it admits one to a field of
characteristic zero (which, as we have seen, can be chosen to be
$\mathbb{Q}^{\mathrm{alg}}$ or $\mathbb{C}$).

An algebraically minded proof proceeds as follows: if there is no
graph homomorphism $G \to \Gamma(\mathbb{C}^d)$, then as in the
discussion above, we can write $\sum_{\{\gamma,\gamma'\}\in
  \mathrm{E}(G)} g_{\{\gamma,\gamma'\}} h_{\{\gamma,\gamma'\}} = 1$
for some $g_{\{\gamma,\gamma'\}} \in \mathbb{C}[(x_{\gamma,i})]$
labeled by the edges of $G$, and $h_{\{\gamma,\gamma'\}} :=
(x_{\gamma',1}-x_{\gamma,1})^2 + \cdots +
(x_{\gamma',d}-x_{\gamma,d})^2 - 1$.  Using some $\mathbb{Z}$-linear
form $\lambda \colon \mathbb{C} \to \mathbb{Q}$ such that
$\lambda(1)=1$, we can even find the $g_{\{\gamma,\gamma'\}}$ with
coefficients in $\mathbb{Q}$.  Now only finitely many primes divide
the denominators of these $g_{\{\gamma,\gamma'\}}$, and reducing
modulo any other $p$ gives a relation of the same sort that precludes
the existence of $G \to \Gamma((\mathbb{F}_p^{\mathrm{alg}})^d)$.

A more logically minded proof of the same thing proceeds by noting
that the theory of algebraically closed fields of characteristic $0$
consists of infinitely many axioms, any finite number of which are
valid for sufficiently large characteristics.  So if the
inexistence\footnote{This also works for the \emph{existence} of such
  a morphism, but the conclusion is subsumed in
  proposition \ref{comparison-of-characteristics} anyway.} of a graph
homomorphism $G \to \Gamma(\mathbb{C}^d)$ can be proved from these
axioms, it can be proved from finitely many of them, giving the
desired conclusion.

This fact does not seem to have any exploitable consequence on the
chromatic number, but here is a converse that does:

\begin{prop}\label{comparison-of-characteristics}
Let $G$ be a finite graph that admits a graph homomorphism $G \to
\Gamma(\mathbb{C}^d)$.  Then there is a graph homomorphism $G \to
\Gamma((\mathbb{F}_p)^d)$ for a set $\mathscr{P}$ of prime numbers
having positive density.  In particular, we have $\chi(\mathbb{C}^d)
\leq \limsup_{p\to+\infty} \chi((\mathbb{F}_p)^d)$.
\end{prop}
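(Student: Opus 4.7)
The plan is to spread out a given homomorphism $G \to \Gamma(\mathbb{C}^d)$ to a number field and then reduce modulo suitable primes. First, I would use the observation from \ref{remarks-algebraically-closed-fields} that existence of a graph homomorphism $G \to \Gamma(E^d)$ into an algebraically closed field $E$ depends only on the characteristic; combined with Hilbert's Nullstellensatz (or just with the fact that the finitely many coordinates appearing in the image of the homomorphism generate a finite extension of $\mathbb{Q}$), this yields a homomorphism $G \to \Gamma(K^d)$ for some number field $K$, whose image consists of $N d$ coordinates $x_{\gamma,i} \in K$.

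Next, in order to reduce modulo primes \emph{with residue field exactly $\mathbb{F}_p$} (not a proper extension of it), I need primes $\mathfrak{p}$ of $\mathcal{O}_K$ of residue degree $1$ over $\mathbb{Q}$. For this I would invoke Chebotarev's density theorem applied to the Galois closure $L$ of $K/\mathbb{Q}$: the rational primes splitting completely in $L$ form a set of density $1/[L:\mathbb{Q}] > 0$, and for any such $p$ every prime $\mathfrak{p}$ of $\mathcal{O}_K$ above $p$ satisfies $\mathcal{O}_K/\mathfrak{p} \cong \mathbb{F}_p$. Removing the finite set of rational primes dividing a denominator of some $x_{\gamma,i}$ leaves a positive-density set $\mathscr{P}$ of primes.

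For $p \in \mathscr{P}$, reducing each $x_{\gamma,i}$ modulo the chosen $\mathfrak{p}$ gives $\bar x_\gamma \in \mathbb{F}_p^d$; since the edge equations $\sum_i (x_{\gamma',i}-x_{\gamma,i})^2 = 1$ have integer coefficients and survive the reduction, the map $\gamma \mapsto \bar x_\gamma$ is a graph homomorphism $G \to \Gamma((\mathbb{F}_p)^d)$ as in \ref{observation-morphisms}. This proves the first statement. For the inequality, setting $N := \limsup_{p\to\infty} \chi((\mathbb{F}_p)^d)$ (the case $N = \infty$ being trivial), every finite subgraph $G_0$ of $\Gamma(\mathbb{C}^d)$ admits a homomorphism to $\Gamma((\mathbb{F}_p)^d)$ for infinitely many $p$, hence $\chi(G_0) \leq \chi((\mathbb{F}_p)^d)$ for infinitely many $p$ and therefore $\chi(G_0) \leq N$; De Bruijn-Erdős (\ref{reminder-de-bruijn-erdos}) then gives $\chi(\mathbb{C}^d) \leq N$.

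The main obstacle is the Chebotarev step. Without it, one only controls the norms of primes of $\mathcal{O}_K$, which are prime \emph{powers}, so one naively only obtains $G \to \Gamma((\mathbb{F}_q)^d)$ for a positive density of prime-power $q$, not the sharper conclusion about prime fields stated in the proposition. A slightly weaker tool (Frobenius density combined with Burnside's orbit-counting lemma, which guarantees that a positive proportion of elements of a finite transitive permutation group have at least one fixed point, hence that a positive proportion of $p$ have at least one degree-$1$ factor in $K$) would already suffice, but some density result of this kind seems essential.
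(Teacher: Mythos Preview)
Your argument is correct and follows essentially the same route as the paper: pass from $\mathbb{C}$ to a number field $K$ containing the coordinates, invoke the \v{C}ebotar\"ev density theorem to produce a positive-density set of rational primes with a degree-$1$ prime above them in $K$, discard the finitely many primes where denominators appear, and reduce. The only cosmetic difference is that you take the primes splitting completely in the Galois closure $L$ (density $1/[L:\mathbb{Q}]$), whereas the paper takes the larger set of primes having \emph{some} degree-$1$ factor in $K$ (density equal to the proportion of elements of $\mathrm{Gal}(L/\mathbb{Q})$ whose conjugacy class meets the fixator of $K$); either set suffices, and your closing remark that even Frobenius density plus an orbit-counting argument would do is a nice observation.
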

\begin{proof}
We know that $G$ admits a homomorphism to
$\Gamma((\mathbb{Q}^{\mathrm{alg}})^d)$.  So (since the vertices of
the image generate a finite extension) there is one to $\Gamma(K^d)$
for some finite extension $K$ of $\mathbb{Q}$ (=number field).  Given
such a graph homomorphism, there are only finitely many primes
$\mathfrak{p}$ of $\mathcal{O}_K$ such that the coordinates of the
image vertices are not all integers at $\mathfrak{p}$ (``have
denominators in $\mathfrak{p}$'').  Furthermore, by the Čebotarëv
density theorem (\cite[theorem 13.4]{Neukirch1999} or
\cite[theorem 6.3.1]{FriedJarden2008}), there exists a set
$\mathscr{P}$ of primes with positive density such that $p \in
\mathscr{P}$ iff $p$ is unramified in $K$ and there is a prime
$\mathfrak{p}$ of $\mathcal{O}_K$ lying over $p$ and having degree $1$
(i.e., same residue field $\mathbb{F}_p$).  (Precisely, if $\Sigma$ is
the Galois group over $\mathbb{Q}$ of the Galois closure of $K$, then
the density of $\mathscr{P}$ is the proportion of elements of $\Sigma$
whose conjugacy class meets the fixator of $K$.)  So possibly removing
finitely many elements from $\mathscr{P}$, we obtained the required
conditions: $G$ admits a homomorphism to
$\Gamma((\mathcal{O}_{K,\mathfrak{p}})^d)$ for some $\mathfrak{p}$
such that $\mathcal{O}_{K,\mathfrak{p}} / \mathfrak{p} \cong
\mathbb{F}_p$.
\end{proof}

Note that we do not need to use \ref{main-result-valuation-rings}
here: we are considering an infinite set of primes, so one simply
excludes those in which there are denominators.  Note that the above
result implies a bound for the classical Hadwiger-Nelson problem,
viz. $\chi(\mathbb{R}^d) \leq \limsup_{p\to+\infty}
\chi((\mathbb{F}_p)^d)$, where each term of the sequence in the right
hand side is finitely computable (although this bound is quite
possibly infinite).

\thingy\label{remarks-real-closed-fields} We can say for real closed
fields much of what we said
in \ref{remarks-algebraically-closed-fields} above for algebraically
closed fields.  Specifically, the finite graphs $G$ for which there
exists a graph homomorphism $G \to \Gamma(E^d)$ with $E$ real closed
do not depend on $E$, and in particular, the value of $\chi(E^d)$ is
the same for all real closed field $E$ (it depends only on $d$).  This
time, the proof invokes Tarski's theorem on the decidability of the
first-order theory of real closed fields
(\cite[theorem 6.41]{Poizat2000}).  One consequence is that
$\chi(\mathbb{R}^d) = \chi((\mathbb{Q}^{\mathrm{r-alg}})^d)$ where
$\mathbb{Q}^{\mathrm{r-alg}}$ stands for the real closure
of $\mathbb{Q}$ (which can be seen as the set of real algebraic
numbers)\footnote{This answers \cite[problem 11.1]{Soifer2009}, but
  the question is perhaps misstated since the MathSciNet review of
  \cite{BendaPerles2000} (the present author does not have access to
  the paper itself) suggests that the remark above is already
  contained there.}.  Another is that, for any $n,d$ the fact that
$\chi(\mathbb{R}^d)\geq n$, if true, is provable (by enumerating all
finite graphs $G$ until one finds one with chromatic number $\geq n$
and which admits a homomorphism to $\Gamma(\mathbb{R}^d)$, a fact
which can be tested using some other decision procedure for real
closed fields).

In particular, if the answer to the classical Hadwiger-Nelson problem
turns out to be $\chi(\mathbb{R}^2) = 7$, then this fact is provable.

\section{Remarks on changing the quadratic form}\label{section-quadratic-form}

We can generalize \ref{definition-main-graph} as follows:

\begin{dfn}
Let $E$ be any field (or even any commutative ring) and $q$ a
quadratic form in $d \geq 1$ variables over $E$.  We define a graph
$\Gamma(E^d, q)$ as follows: vertices of $\Gamma(E^d, q)$ are
$d$-tuples from $E$, with an edge between $(x_1,\ldots,x_d)$ and
$(x'_1,\ldots,x'_d)$ whenever $q(x'-x) = 1$.  We write $\chi(E^d, q) =
\chi(\Gamma(E^d, q))$ for the chromatic number of this graph
$\Gamma(E^d, q)$ (possibly $+\infty$).
\end{dfn}

The case considered in \ref{definition-main-graph} is that where $q =
x_1^2 + \cdots + x_d^2$.

\thingy If $E$ is a field and the quadratic form $q$ is degenerate,
meaning that there is a nontrivial subspace $V$ of $E^d$ (the largest
of which is called $\ker(q)$) such that $q(x+y) = q(x)$ for all $x\in
E^d$ and $y\in V$, then we can color $E^d$ with a certain number of
colors by coloring (the quotient vector space) $E^d/V$, and it is easy
to see that $\chi(E^d, q) = \chi(E^d/V, q)$ where the second $q$
refers to the obviously defined quadratic field on $E^d/V$, and it is
nondegenerate.  So we can always assume (up to a change in $d$) that
$q$ is nondegenerate.

\thingy If $E$ is the field $\mathbb{R}$ of reals, or more generally a
real closed field, then Sylvester's law of inertia states that any
nondegenerate quadratic field on $E^d$ is equivalent to $x_1^2 +
\cdots + x_s^2 \, - x_{s+1}^2 - \cdots - x_d^2$ for some $0\leq s \leq
d$: the pair $(s,d-s)$ is called the \emph{signature} of the quadratic
form.  The graph $\Gamma(E^d, q)$ (up to isomorphism), and its
chromatic number $\chi(E^d, q)$ only depend on this signature.  But
note that if $E$ is a subfield of $\mathbb{R}$, in general, not all
quadratic forms over $E$ that are equivalent to $x_1^2 + \cdots +
x_d^2$ over $\mathbb{R}$ will be equivalent to it over $E$: and the
computation of any such $\chi(E^d, q)$ might legitimately be
considered an analogue of the Hadwiger-Nelson problem with
coefficients in $E$.

\thingy\label{lorentzian-case}
In the case of rank $d=2$ over the reals (or more generally a real
closed field), the only nondegenerate quadratic form other than the
Euclidean $x_1^2 + x_2^2$ is the \emph{Lorentzian} (or
\emph{Minkowskian}) quadratic form, viz. $q_L := x_1^2 - x_2^2$.  This
quadratic form is of great importance in special relativity (if $x_1$
is the time coordinate and $x_2$ the space coordinate, then
$\sqrt{q(x-y)}$ defines the proper-time separation of the events $x$
and $y$).

The question of computing $\chi(\mathbb{R}^2, q_L)$, or even just
deciding whether it is finite, seems an interesting one (to which the
present author does not know the answer), and it would shed light on
how to handle the ``isotropic'' case ($q_L$ has nontrivial zeros).
One thing that can be said is that $\chi(\mathbb{R}^2, q_L) \leq
\chi(\mathbb{C}^2)$ (since all nondegenerate quadratic forms
over $\mathbb{C}$ are equivalent); conversely, $\chi(\mathbb{C}^2)
\leq \chi(\mathbb{R}^4,\; x_1^2 + x_2^2 - x_3^3 - x_4^2)$ is easily seen
by separating complex numbers into real and imaginary parts, so the
two problems of computing $\chi(\mathbb{R}^2, q_L)$ and
$\chi(\mathbb{C}^2)$ are intimately related.  We can at least say
this:

\begin{prop}
If $q_L = x_1^2 - x_2^2$ is the ``Lorentzian'' quadratic form on
$\mathbb{R}^2$, then $\Gamma(\mathbb{R}^2,q_L)$ has cycles of any
order $\geq 3$ but no triangle.  In particular, $\chi(\mathbb{R}^2,
q_L) \geq 3$.
\end{prop}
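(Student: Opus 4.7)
The plan is to handle the two parts separately, both centered on the ``unit hyperbola'' $H := \{x\in\mathbb{R}^2 : q_L(x) = 1\}$ (the analogue of the unit circle for $q_L$) and its parametrization $(\epsilon\cosh t, \sinh t)$ with $\epsilon\in\{\pm 1\}$ sweeping out the two branches.

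\textbf{No triangle.} Translation invariance lets me place one vertex of a putative triangle at the origin, leaving $v = (a,b)$ and $w = (c,d)$ both on $H$ with $q_L(v-w) = 1$. The polarization identity gives $q_L(v-w) = q_L(v) + q_L(w) - 2(ac-bd) = 2 - 2(ac-bd)$, forcing $ac - bd = \tfrac{1}{2}$. But writing $v = (\epsilon\cosh t, \sinh t)$ and $w = (\delta\cosh s, \sinh s)$, a direct computation yields $ac - bd = \cosh(t-s)$ when $\epsilon\delta = +1$ and $ac - bd = -\cosh(t+s)$ when $\epsilon\delta = -1$; either way $|ac-bd| \geq 1$, contradicting the required value $\tfrac{1}{2}$.

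\textbf{Cycles of every length $n \geq 4$.} Any sequence $v_1,\ldots,v_n \in H$ with $\sum_i v_i = 0$ and pairwise distinct partial sums $0, v_1, v_1+v_2, \ldots, v_1+\cdots+v_{n-1}$ furnishes a graph homomorphism $C_n \to \Gamma(\mathbb{R}^2, q_L)$ via the walk through those partial sums. For even $n = 2k \geq 4$, I pick $k-1$ generic points $u_1,\ldots,u_{k-1} \in H$ and use $(1,0), u_1,\ldots,u_{k-1}, -(1,0), -u_1,\ldots,-u_{k-1}$; since $H$ is symmetric about the origin, every term lies on $H$, the total sum vanishes, and a generic choice of the $u_i$ separates the partial sums. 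For odd $n = 2k+1 \geq 5$, I take $p = 2k-1 \geq 3$ copies of $(1,0)$ followed by the symmetric pair $u = (-c,d)$ and $\bar u = (-c,-d)$ with $c = (2k-1)/2$ and $d = \sqrt{c^2 - 1}$, real and nonzero since $c \geq 3/2$; both lie on $H$, their contribution $u + \bar u = (-2c,0) = -(2k-1,0)$ cancels the $p$ copies of $(1,0)$, and the partial sums $(0,0), (1,0), (2,0), \ldots, (2k-1,0), ((2k-1)/2, d)$ are distinct because $d\neq 0$.

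The bound $\chi(\mathbb{R}^2, q_L) \geq 3$ then follows from the existence of a single odd cycle, e.g., the explicit $C_5$ obtained from $k=2$ (which passes through $(0,0), (1,0), (2,0), (3,0), (3/2, \sqrt{5}/2)$). The main subtlety is the odd-cycle construction: since $H$ meets the $x$-axis only at $(\pm 1, 0)$ one cannot simply alternate axis vectors to close an odd loop, and the trick of walking along the $x$-axis and returning via two symmetric diagonal unit steps is what handles every odd length $\geq 5$ uniformly.
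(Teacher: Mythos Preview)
Your argument is correct.  The triangle-freeness part takes a genuinely different route from the paper: the paper orients the edges of $\Gamma(\mathbb{R}^2,q_L)$ by the causal partial order, then uses the Lorentz boosts $T_\eta$ to normalize a putative triangle to $x=(0,0)$, $y=(1,0)$ and derive a contradiction from the two equations for $z$.  Your polarization-and-parametrization approach sidesteps both the ordering and the group action: reducing to $ac-bd=\tfrac12$ and observing that this bilinear form equals $\pm\cosh(t\mp s)$ is cleaner and entirely elementary, at the modest cost of having to check the two sign cases by hand.  For the cycles, your odd-length construction with $2k-1$ steps along the axis followed by two symmetric diagonal returns is exactly the paper's $(k+2)$-cycle (its $k$ being your $2k-1$); the paper, however, uses that \emph{same} axis-plus-apex construction for all lengths $\geq 5$, even and odd alike, and handles $n=4$ with a single explicit example, whereas you split off the even case and rely on a genericity argument to separate the partial sums.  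Both strategies work; the paper's is a touch more explicit, yours is a touch more structural.
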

\begin{proof}
First we show that $\Gamma(\mathbb{R}^2,q_L)$ has no triangle.  Before
we do this, we define the \emph{causal partial order} on
$\mathbb{R}^2$ as follows: we say that $x \mathrel{<}_{\mathrm{caus}}
y$ for $x,y \in \mathbb{R}^2$ when $q(y-x)>0$ and $x_1 < y_1$, or
alternatively, $|y_2 - x_2| < y_1 - x_1$.  This causal partial order
defines an orientation on the edges of $\Gamma(\mathbb{R}^2,q_L)$
(orient an edge $(x,y)$ from $x$ to $y$ when $x
\mathrel{<}_{\mathrm{caus}} y$, which here just means $x_1 < y_1$).
Now assume $x,y,z$ is a triangle.  By permuting, we can assume $x
\mathrel{<}_{\mathrm{caus}} y \mathrel{<}_{\mathrm{caus}} z$.  By
translating, we can assume $x=(0,0)$.  By applying the ``Lorentz
group'' $\{T_\eta\colon (u_1,u_2) \mapsto (u_1 \cosh\eta + u_2
\sinh\eta,\; u_1 \sinh\eta + u_2 \cosh\eta)\}$, which preserves the
quadratic form $q_L$, we can assume $y = (1,0)$ (just take $\eta =
-\arctanh(y_2/y_1)$).  Now we have simultaneously $z_1^2 - z_2^2 = 1$
and $(z_1-1)^2 - z_2^2 = 1$, so simultaneously $z_1 = \sqrt{1+z_2^2}$
and $z_1 = 1+\sqrt{1+z_2^2}$, a contradiction.  This shows that there
are no triangles.

To construct a $(k+2)$-cycle for any $k\geq 3$, consider the $k+1$
points $(i,0)$ for $0\leq i\leq k$, together with $(\frac{k}{2},
\frac{\sqrt{k^2-4}}{2})$, the latter being adjacent to both $(0,0)$
and $(k,0)$.  For a $4$-cycle, consider for example $(0,0)$, $(1,0)$,
$(\frac{9}{4},\frac{3}{4})$ and $(\frac{5}{4},\frac{3}{4})$.

The conclusion on the chromatic number follows from the existence of
an odd cycle.
\end{proof}

\thingy Proposition \ref{main-result-valuation-rings} was stated for
$\Gamma(K^d)$ and $\Gamma(\kappa^d)$ for simplicity, but the proof
carries over exactly to $\Gamma(K^d, q)$ and $\Gamma(\kappa^d,\bar q)$
if $q$ is a quadratic form in $d$ variables with coefficients in $A$
and $\bar q$ is its reduction mod $\mathfrak{m}$.  The statement is
then:

Let $A$ be a valuation ring with valuation $v$: write $\mathfrak{m} :=
\{x\in A : v(x)>0\}$ for its (unique) maximal ideal, $\kappa :=
A/\mathfrak{m}$ the residue field, and $K := \Frac(A)$ for the field
of fractions of $A$.  Assume that $q$ is a quadratic form in $d$
variables with coefficients in $A$ such that the quadratic form $\bar
q$ obtained by reducing these coefficients mod $\mathfrak{m}$ is
\emph{anisotropic} over $\kappa$ (that is, $\bar q(z_1,\ldots,z_d) =
0$ has no solution in $\kappa$ other than the trivial
$(z_1,\ldots,z_d) = (0,\ldots,0)$).  Then there is a graph
homomorphism $\psi\colon \Gamma(K^d, q) \to \Gamma(A^d, q)$.  In
particular, $\chi(K^d, q) = \chi(A^d, q) \leq \chi(\kappa^d, \bar q)$.

\section{Remarks on the role of the axiom of choice}\label{section-choice}

The axiom of choice is used in several places in the results above:
remark \ref{fields-of-charac-2} uses it to produce an
$\mathbb{F}_2$-linear form on a field $E$ of characteristic $2$ that
takes the value $1$ at $1$, and more importantly,
proposition \ref{main-result-valuation-rings} uses it to select a
representative $\xi_C$ from each equivalence class $C$ of $K^d/A^d$.
In the absence of the axiom of choice, we can still say certain
things, however:

\thingy If, instead of working with the chromatic number $\chi(G)$ of
a graph, we work with the ``finite-limit-chromatic number''
$\chi_{\mathrm{fin}}(G)$, which is \emph{defined} as the upper bound
of the $\chi(G_0)$ for all finite subgraphs $G_0$ of $G$, making the
De Bruijn-Erdős theorem trivially true, then the results of sections
\ref{section-generalities} to \ref{section-explicit-values} of this
paper hold, in the absence of Choice, for $\chi_{\mathrm{fin}}$
instead of $\chi$ (because only finitely many choices have to be
made).

Note that the question of computing
$\chi_{\mathrm{fin}}(\mathbb{R}^2)$ is precisely the same as that of
computing $\chi(\mathbb{R}^2)$ in the presence of the axiom of choice.
Furthermore, since the statement ``$\chi_{\mathrm{fin}}(\mathbb{R}^2)
= n$'' is an \emph{arithmetical} one (i.e., one that can be stated in
the language of first-order arithmetic: namely, the one which states
that every finite unit-distance graph with real algebraic coordinates
can be colored with $n$ colors and at least one requires this number
of colors), its truth value does not, in fact, depend on the axiom of
choice (because the Gödel constructible universe $L$ has the same
integers, so the same true arithmetical statements as the real
universe $V$ of set theory).  One might therefore argue that the
``right'' Hadwiger-Nelson problem in the absence of choice concerns
the value of $\chi_{\mathrm{fin}}(\mathbb{R}^2)$, not
$\chi(\mathbb{R}^2)$ (which might be ``artificially higher'' because
certain colorings are not available in the absence of choice): the
value of $\chi_{\mathrm{fin}}(\mathbb{R}^2)$ is a purely arithmetical
question, and therefore independent of set-theoretical subtleties.

\thingy If, however, we insist in working with $\chi(E^2)$ (and not
$\chi_{\mathrm{fin}}$) in the absence of choice, the results
formulated above are still applicable over certain fields.
Specifically, the facts that $\chi(\mathbb{Q}^2) = 2$
(par. \ref{remark-reduction-mod-4}), that
$\chi(\mathbb{Q}(\sqrt{2})^2) = 2$
(prop. \ref{chromatic-number-of-q-sqrt-2}), that
$\chi(\mathbb{Q}(\sqrt{3})^2) = 3$
(prop. \ref{chromatic-number-of-q-sqrt-3}), that
$\chi(\mathbb{Q}(\sqrt{7})^2) = 3$
(prop. \ref{chromatic-number-of-q-sqrt-7}), and that $4 \leq
\chi(\mathbb{Q}(\sqrt{3}, \sqrt{11})^2) \leq 5$
(prop. \ref{chromatic-number-of-q-sqrt-3-and-11}) still hold in the
absence of choice: the reason for this is that any choice which
requires the axiom in \ref{main-result-valuation-rings} can in fact be
done systematically for the specific fields considered here.  For
example, it \emph{does not} require the axiom of choice to select a
representative from each class of $\mathbb{Q}_3(\sqrt{3}) /
\mathbb{Z}_3[\sqrt{3}]$: one can simply write an element of
$\mathbb{Q}_3(\sqrt{3})$ in the form $\sum_{i=-N}^{+\infty} a_i
\sqrt{3}^{i}$ with $a_i \in \{0,1,2\}$ and choose the representative
$\sum_{i=-N}^{-1} a_i \sqrt{3}^{i}$.

\end{document}